\documentclass{article}
\usepackage{graphicx} 
\usepackage{amssymb}
\usepackage{mathrsfs}
\usepackage{amsmath}
\usepackage{amsthm}
\newtheoremstyle{mytheoremstyle}
  {3pt}
  {3pt}
  {\normalfont}
  {0pt}
  {\bfseries}
  {}
  {.5em}
  {}
\theoremstyle{mytheoremstyle}
\newtheorem{theorem}{Theorem}[section]

\newtheorem{lemma}{Lemma}[section]
\newtheorem{proposition}{Proposition}[section]

\newtheorem{claim}{Claim}

\usepackage[urlcolor=blue]{hyperref}
\usepackage{threeparttable}
\usepackage{setspace}
\usepackage{titlesec}
\usepackage{float}
\usepackage{fancyhdr}
\usepackage{cite}
\usepackage{tikz}
\usetikzlibrary{arrows.meta,positioning}
\usetikzlibrary{fit}
\usepackage{booktabs} 
\usepackage{pgffor} 

\title{Internally-disjoint Pendant Steiner Trees in Digraphs}

\author{Shanshan Yu$^{1}$,  Yuefang Sun$^{2}$
\\
$^{1}$ School of Mathematics and Statistics,
Ningbo University,\\
Ningbo 315211,  China, yushanshan33hh@163.com\\
$^{2}$ Corresponding author. School of Mathematics and Statistics,\\
Ningbo University,
Ningbo 315211, China, sunyuefang@nbu.edu.cn}
\date{}

\begin{document}

\maketitle

\begin{abstract}
      
    For a digraph $D=(V(D),A(D))$ and a set $S\subseteq V(D)$ with $|S|\geq 2$ and $r\in S$, a directed pendant $(S,r)$-Steiner tree (or, simply, a pendant $(S,r)$-tree) is an out-tree $T$ rooted at $r$ such that $S\subseteq V(T)$ and each vertex of $S$ has degree one in $T$. Two pendant $(S,r)$-trees are called internally-disjoint if they are arc-disjoint and their common vertex set is exactly $S$. The goal of the {\sc Internally-disjoint Directed Pendant Steiner Tree Packing (IDPSTP)} problem is to find a largest collection of pairwise internally-disjoint pendant $(S,r)$-trees in $D$. Let $\tau_{k}(D)=\min\{\tau_{S,r}(D)\mid S\subseteq V(D),|S|=k,r\in S\}$, where $\tau_{S,r}(D)$ denotes the maximum number of pairwise internally-disjoint pendant $(S,r)$-trees in $D$.

   
    In this paper, we first completely determine the computational complexity for the decision version of IDPSTP on Eulerian digraphs and symmetric digraphs. We then show that, for any $\epsilon>0$, given an instance of IDPSTP with order $n$, it is NP-hard to approximate the solution within $O(n^{{1/3}-\epsilon})$. 
    Finally, we get some sharp bounds for the parameter $\tau_{k}(D)$.
\vspace{0.3cm}\\
{\bf Keywords:} Steiner tree; packing; connectivity; out-tree; Eulerian digraph; 
symmetric digraph; Nordhaus-Gaddum-type bound. 
\vspace{0.3cm}\\
{\bf AMS subject classification (2020)}: 05C05, 05C20, 05C40, 05C45, 05C70, 05C75, 05C85, 68Q25.
\end{abstract}

\section{Introduction}

\subsection{Notation and terminology}
All digraphs considered in this paper have no parallel arcs or loops. We use $[n]$ to denote the set of all natural numbers from $1$ to $n$, and let $|S|$ denote the number of elements in a set of $S$. 

Let $D=(V(D),A(D))$ be a digraph. For a vertex $v\in V(D)$, we use $N_{D}^{+}(v)$ (resp. $N_{D}^{-}(v)$) to denote the {\em out-neighbours} (resp. {\em in-neighbours}) of $v$ in $D$, that is, $$N_{D}^{+}(v)=\{u\in V(D)\backslash \{v\}\mid vu\in A(D)\}, N_{D}^{-}(v)=\{w\in V(D)\backslash \{v\}\mid wv\in A(D)\}.$$ 
Furthermore, for a subset $W\subseteq V(D)$, we use $N_{D}^{+}(W)$ (resp. $N_{D}^{-}(W)$) to denote the out-neighbours (resp. in-neighbours) of $W$ in $D$, that is, $$N_{D}^{+}(W)=\bigcup_{w\in W}N_{D}^{+}(w)\backslash W,\quad N_{D}^{-}(W)=\bigcup_{w\in W}N_{D}^{-}(w)\backslash W.$$
The {\em out-degree} (resp. {\em in-degree}) of $v$ in $D$ is $d_{D}^{+}(v)=|N_{D}^{+}(v)|$ (resp. $d_{D}^{-}(v)=|N_{D}^{-}(v)|$). We call out-degree and in-degree of a vertex its {\em semi-degrees}. The {\em degree} of a vertex $v$ in $D$ is the sum of its semi-degrees, i.e., $d_{D}(v)=d_{D}^{+}(v)+d_{D}^{-}(v)$.
If the context is clear, we always omit $D$ in the above notation. 
The {\em minimum out-degree} (resp. {\em minimum in-degree}) of $D$ is $\delta^{+}(D)=\min\{d^{+}(v)\mid v\in V(D)\}$ (resp. $\delta^{-}(D)=\min\{d^{-}(v)\mid v\in V(D)\}$). The {\em minimum semi-degree} of $D$ is $\delta^{0}(D)=\min\{{\delta}^{+}(D),{\delta}^{-}(D)\}$.

A digraph $D$ is {\em Eulerian} if $D$ is connected and $d^+(x)=d^-(x)$ for every vertex $x\in V(D)$. A digraph $D$ is {\em symmetric}, if $xy \in A(D)$ then $yx \in A(D)$. In other words, a symmetric digraph $D$ can be obtained from its underlying undirected graph $G$ by replacing each edge of $G$ with the corresponding arcs of both directions, denoted by $D=\overleftrightarrow{G}$. A digraph is {\em connected} if its underlying undirected graph is connected. Observe that a connected symmetric digraph is Eulerian.  

\subsection{Backgrounds}
The packing problem is not only a fundamental problem in combinatorics but also one of the important research topics in theoretical computer science. The goal of a graph packing problem is to find edge-disjoint subgraphs which satisfy specific structural conditions in a given graph. This type of graph-theoretical problem includes the famous graph matching problem~\cite{Galil-Micali-Gabow}, Hamiltonian cycle decomposition problem~\cite{Tillson}, disjoint paths problem~\cite{Kawarabayashi-Kobayashi-Reed} and Steiner tree packing problem~\cite{Sun-Yeo}. 

For a graph $G=(V(G), E(G))$ and a set $S\subseteq V(G)$ with $|S|=k\geq 2$, a tree $T$ of $G$ is called an {\em $S$-Steiner tree} (or {\em $S$-tree} for short) if $S\subseteq V(T)$. Two $S$-trees $T_{1}$ and $T_{2}$ are said to be {\em edge-disjoint} if $E(T_{1})\cap E(T_{2})=\varnothing$; moreover, they are {\em internally-disjoint} if $V(T_{1})\cap V(T_{2})=S$. The famous (undirected) \textbf{Steiner tree packing problem} is defined as follows: Given an undirected graph $G$ and a terminal set $S\subseteq V(G)$, the goal is to find the largest collection of pairwise edge-disjoint $S$-Steiner trees. Over the years, this problem has received considerable attention from the researchers of graph theory, combinatorial optimization and theoretical computer science~\cite{Gr¨otschel-Martin-Weismantel-1,Gr¨otschel-Martin-Weismantel-2,Gr¨otschel-Martin-Weismantel-3,Jain-Mahdian-Salavatipour,Li-Mao,Pulleyblank}, due to its important theoretical significance and strong background of applications.. 

The concept of $S$-Steiner tree was extended to digraphs by researchers. An {\em out-tree} is an oriented tree in which every vertex except one, called the {\em root}, has in-degree one. For a digraph $D=(V(D),A(D))$, and a subset of vertex $S$ with $r\in S$ and $|S|=k\geq 2$, a {\em directed $(S,r)$-Steiner tree} or, simply, {\em an $(S,r)$-tree}, is an out-tree $T$ rooted at $r$ with $S\subseteq V(T)$. Two $(S,r)$-trees $T_{1}$ and $T_{2}$ are said to be {\em internally-disjoint} if $A(T_{1})\cap A(T_{2})=\varnothing$ and $V(T_{1})\cap V(T_{2})=S$.

Cheriyan and Salavatipour \cite{Che-Sal}, and Sun and Yeo \cite{Sun-Yeo} studied the following \textbf{Internally-disjoint Directed Steiner Tree packing (IDSTP)} problem: Given a digraph $D=(V(D),A(D))$ and a terminal set $S\subseteq V(D)$ with a root $r\in S$, the goal is to find a largest collection of pairwise internally-disjoint $(S,r)$-trees. 

Furthermore, Sun and Yeo \cite{Sun-Yeo} defined the {\em generalized $k$-vertex-strong connectivity} (which is also called the {\em directed tree connectivity}) of $D$ as
\begin{align*}
    \kappa_{k}(D)=\min\{\kappa_{S,r}(D)\mid S\subseteq V(D),|S|=k,r\in S\},
\end{align*}
where $\kappa_{S,r}(D)$ denotes the maximum number of pairwise internally-disjoint $(S,r)$-trees in $D$. Clearly, $\kappa_{2}(D)=\kappa(D)$, where $\kappa(D)$ is the classical vertex-strong connectivity of $D$. Some results on directed Steiner tree packing and related topics have been obtained in \cite{Sun-Gutin-Yeo-Zhang, Sun-Yeo} (also can be found in a new monograph~\cite{Sun-book}). 

In this paper, we introduce the internally-disjoint directed pendant Steiner tree packing problem, which could be seen as a restriction of IDSTP. If each vertex of $S$ has degree one in an $(S,r)$-tree $T$, then $T$ is called a {\em pendant $(S,r)$-tree}. 
Two pendant $(S,r)$-trees $T_{1}$ and $T_{2}$ are said to be {\em internally-disjoint} if $A(T_{1})\cap A(T_{2})=\varnothing$ and $V(T_{1})\cap V(T_{2})=S$. Observe that in a pendant $(S, r)$-tree,  the in-degree of $r$ and the out-degree of each vertex $v\in S\setminus \{r\}$ is zero, and the out degree of $r$ and the in-degree of each vertex $v\in S\setminus \{r\}$ is one.  For example, in Figure~\ref{fig:two steiner trees}, there are two $(S, r)$-trees with $S=\{r, v_1, v_2\}$, one is a pendant $(S, r)$-tree and the other one is not pendant.

\begin{figure}[h]
    \centering
    \begin{tikzpicture}
        \begin{scope}[xshift=0cm]
            \node[circle,draw,fill=white,inner sep=1.5pt,line width=0.8pt] (r) at (0,0) {};
            \node[circle,draw,fill=white,inner sep=1.5pt,line width=0.8pt] (u) at (1,0) {};
            \node[circle,draw,fill=white,inner sep=1.5pt,line width=0.8pt] (v1) at (2,0.8) {};
            \node[circle,draw,fill=white,inner sep=1.5pt,line width=0.8pt] (v2) at (2,-0.8) {};
            \node at (1,-1.5) {(a) A pendant $(S, r)$-tree};

            \draw[->] (r) -- (u);
            \draw[->] (u) -- (v1);
            \draw[->] (u) -- (v2);

            \node[below]  at (r) {$r$};
            \node[below]  at (u) {$u$};
            \node[right]  at (v1) {$v_{1}$};
            \node[right]  at (v2) {$v_{2}$};
        \end{scope}

        \begin{scope}[xshift=5cm]
            \node[circle,draw,fill=white,inner sep=1.5pt,line width=0.8pt] (r) at (0,0) {};
            \node[circle,draw,fill=white,inner sep=1.5pt,line width=0.8pt] (v1) at (2,0.8) {};
            \node[circle,draw,fill=white,inner sep=1.5pt,line width=0.8pt] (v2) at (2,-0.8) {};
            \node at (1,-1.5) {(b) A non-pendant $(S, r)$-tree};

            \draw[->] (r) -- (v1);
            \draw[->] (r) -- (v2);

            \node[below]  at (r) {$r$};
            \node[right]  at (v1) {$v_{1}$};
            \node[right]  at (v2) {$v_{2}$};
        \end{scope}
    \end{tikzpicture}
    \caption{Two $(S, r)$-trees with $S=\{r, v_1, v_2\}$}
    \label{fig:two steiner trees}
\end{figure}


The \textbf{Internally-disjoint Directed Pendant Steiner Tree Packing (IDPSTP)} problem can be defined as follows: Given a digraph $D=(V(D),A(D))$ and a terminal set $S\subseteq V(D)$ with a root $r\in S$, the goal is to find a largest collection of pairwise internally-disjoint pendant $(S,r)$-trees. 

Now we define the directed pendant-tree $k$-connectivity of a digraph $D$ as
\begin{align*}
    \tau_{k}(D)=\min\{\tau_{S,r}(D)\mid S\subseteq V(D),|S|=k,r\in S\},
\end{align*}
where $\tau_{S,r}(D)$ denotes the maximum number of pairwise internally-disjoint pendant $(S,r)$-trees in $D$. By definition, it is clear that
\begin{align*}
    \begin{cases}
        \tau_{2}(D) = \kappa_{2}(D)=\kappa(D), \\
        \tau_{k}(D) \leq \kappa_{k}(D), 3\leq k\leq n.
    \end{cases}
\end{align*}
Hence, $\tau_{k}(D)$ is another type of generalization of the classical vertex-strong connectivity $\kappa(D)$ of digraphs. Note that this parameter is also an extension of pendant-tree connectivity~\cite{Hager} from undirected graphs to digraphs.

\subsection{Our results}

This paper is organized as follows. In Section 2, we study the computational complexity of the decision version of the internally-disjoint pendant Steiner tree packing problem in digraphs. When both $k\geq 3$, $\ell\geq 2$ are fixed integers, we prove that the problem of deciding whether there are at least $\ell$ pairwise internally-disjoint pendant $(S,r)$-trees in Eulerian digraphs is NP-complete, where $r\in S\subseteq V(D)$ and $|S|=k$ (Theorem \ref{thm2.2}). And this result implies the following Table~\ref{tab:Eulerian_digraphs}.

\begin{table}[htbp] 
    \centering 
    \caption{Eulerian digraphs}
    \begin{tabular}{l|c|c} 
    \toprule 
    $\tau_{S,r}(D) \geq \ell~?$, $|S| = k$ & $k \geq 3$ constant & $k$ part of input \\
    \midrule 
    $\ell \geq 2$ constant & NP-complete & NP-complete\\
    \midrule 
    $\ell$ part of input & NP-complete & NP-complete\\
    \bottomrule 
    \end{tabular}
    \label{tab:Eulerian_digraphs}
\end{table}

However, the complexity for the internally-disjoint pendant Steiner tree packing problem in symmetric digraphs is different. When both $k\geq 3$, $\ell\geq 2$ are fixed integers, the problem of deciding whether there are at least $\ell$ pairwise internally-disjoint pendant $(S,r)$-trees in symmetric digraphs is polynomial-time solvable (Theorem \ref{thm2.3}), and it becomes NP-complete otherwise (Theorems~\ref{thm2.4} and~\ref{thm2.6}), as shown in Table~\ref{tab:symmetric_digraphs}. 

\begin{table}[htbp]
    \centering 
    \caption{Symmetric digraphs}
    \begin{tabular}{l|c|c} 
    \toprule 
    $\tau_{S,r}(D) \geq \ell~?$, $|S| = k$  & $k \geq 3$ constant & $k$ part of input \\
    \midrule 
    $\ell \geq 2$ constant & Polynomial & NP-complete\\
    \midrule 
    $\ell$ part of input & NP-complete & NP-complete\\
    \bottomrule 
    \end{tabular}
    \label{tab:symmetric_digraphs}
\end{table}

In the last part of Section 2, we also study the inapproximability of IDPSTP (Theorem~\ref{inappro}) by showing that, for any $\epsilon>0$, given an instance of IDPSTP with order $n$, it is NP-hard to approximate the solution within $O(n^{{1/3}-\epsilon})$.

In Section 3, we study sharp bounds for the parameter $\tau_{k}(D)$. 
The first result is about a sharp lower bound and a sharp upper bound for $\tau_k(D)$ in terms of $n$ and $k$ (Theorem~\ref{thm3.1}). The second result concerns a sharp upper bound for $\tau_{k}(D)$ in terms of arc-cuts of $D$ (Theorem~\ref{thm3.2}). Finally, we obtain sharp Nordhaus-Gaddum type bounds of $\tau_{k}(D)$ (Theorem~\ref{thm3.3}).


\section{Computational complexity for $\tau_{S,r}(D)$}
In this section, we will first completely determine the computational complexity for the decision version of IDPSTP on Eulerian digraphs and symmetric digraphs, and then study the inapproximability of IDPSTP.

\subsection{Eulerian digraphs}
The problem of \textbf{Directed $k$-Linkage} is defined as follows: 
Let $k\geq 2$ be a fixed integer. Given a digraph $D$ and a (terminal) sequence $((s_1,t_1),\cdots ,(s_k,t_k))$ of distinct vertices of $D,$ decide whether $D$ has $k$ vertex-disjoint paths $P_1,\cdots ,P_k$, where $P_i$ starts at $s_i$ and ends at $t_i$ for each $i\in [k].$

Sun and Yeo proved the NP-completeness of Directed 2-Linkage for Eulerian digraphs.

\begin{theorem}\label{thm2.1}\cite{Sun-Yeo}
    The problem of Directed 2-Linkage restricted to Eulerian digraphs is NP-complete.
\end{theorem}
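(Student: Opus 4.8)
The plan is to argue membership in NP and then NP-hardness. For membership, a certificate is a pair of vertex-disjoint paths $P_1,P_2$ with $P_i$ running from $s_i$ to $t_i$; checking disjointness and the endpoints is polynomial. For hardness I would reduce from the general Directed $2$-Linkage problem, which is NP-complete by the theorem of Fortune, Hopcroft and Wyllie. Thus the task is to convert an arbitrary instance $(D,(s_1,t_1),(s_2,t_2))$ into an \emph{Eulerian} instance with the same answer.

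First I would make the terminals extreme, i.e.\ arrange $d^-(s_1)=d^-(s_2)=0$ and $d^+(t_1)=d^+(t_2)=0$, by attaching private vertices with arcs $s_i^{*}\to s_i$ and $t_i\to t_i^{*}$ and renaming; this preserves the answer. The engine of the reduction is the observation that, in every valid solution, $s_i\in V(P_i)$ and $t_i\in V(P_i)$, so by vertex-disjointness no source is ever re-entered and no sink is ever passed through. Consequently any arc whose head is a source $s_j$, and any arc whose tail is a sink $t_i$, can never lie on a solution path, since its use would force a source to be entered or a sink to be traversed. I would call such arcs \emph{free}: they may be added to $D$ in any number without creating a single new solution, while the original solutions survive untouched.

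Using only free arcs I would Eulerianise. Writing $\beta(x)=d^+(x)-d^-(x)$, the sources and the in-deficient internal vertices ($\beta>0$) must gain in-arcs, while the sinks and the out-deficient internal vertices ($\beta<0$) must gain out-arcs. An in-arc to a source, and an out-arc from a sink, are automatically free; an in-arc to an internal vertex is free exactly when its tail is a sink, and an out-arc from an internal vertex is free exactly when its head is a source. Since $\sum_x\beta(x)=0$, I would route a balancing set of free arcs of the forms sink$\to$internal, internal$\to$source and sink$\to$source, cancelling every imbalance, inserting subdivision vertices (themselves balanced) wherever parallel arcs threaten simplicity, and a few further free arcs to keep the underlying graph connected. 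Because no solution path can traverse a free arc, a solution of the resulting Eulerian digraph $D'$ uses only arcs of $D$ and hence solves $D$, while any solution of $D$ solves $D'$ verbatim.

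The step I expect to be the real obstacle is the \emph{feasibility} of this balancing. All free flow necessarily originates at a sink and terminates at a source, so the total in-arcs deliverable to internal vertices is capped by $\sum_i d^-(t_i)$ and the total out-arcs absorbable from them by $\sum_j d^+(s_j)$; if the internal imbalance of $D$ exceeds these capacities, no purely free balancing exists. Overcoming this is the crux: I would either pad the terminals so that their semi-degrees dominate the internal imbalance, or, more robustly, tailor the reduction (for instance directly from a satisfiability problem, in the spirit of Fortune--Hopcroft--Wyllie) so that the digraph produced is already balanced at every internal vertex, leaving only the four terminals to be repaired by free arcs. Verifying that such a tailored construction is simultaneously correct, simple and Eulerian is where the bulk of the careful work would go.
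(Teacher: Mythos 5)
The statement you are proving is not proved in this paper at all: it is imported verbatim from the reference [Sun--Yeo], so there is no in-paper argument to compare yours against. Judged on its own terms, your proposal contains a genuine gap, and it is exactly the one you flag yourself. Your ``free arc'' observation is correct: after making the terminals extreme, any arc whose head is a source or whose tail is a sink can never lie on a path of a valid solution, so such arcs (and subdivided versions of them) may be added without changing the answer. But the Eulerianisation step then provably cannot be completed by free arcs alone. Every new in-arc delivered to an in-deficient internal vertex must, tracing back through whatever new balanced gadget supplies it, originate at a sink (any other entry point into the supplying gadget would let a solution path detour through it and create new linkages). Writing $W$ for the set of new supply vertices, flow conservation on $W$ gives $\Delta^{+}\leq |\delta^{-}(W)|\leq d^{-}(t_1)+d^{-}(t_2)+O(1)$, where $\Delta^{+}$ is the total in-deficiency of the internal vertices; dually $\Delta^{-}\leq d^{+}(s_1)+d^{+}(s_2)+O(1)$. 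Since $\Delta^{+}$ is unbounded over general instances while the right-hand side is not, the balancing is infeasible in general.

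Neither of your proposed remedies closes this. ``Padding the terminals'' cannot work inside the free-arc framework: the quantity $\beta(t_1)+\beta(t_2)$ (with $\beta=d^{+}-d^{-}$) is non-decreasing under every free addition, so no sequence of safe modifications can raise the sinks' usable out-capacity above their original in-degree plus a constant --- any padding arc into a sink whose tail is reachable from a source is itself usable and changes the problem. That leaves your second remedy, redesigning the Fortune--Hopcroft--Wyllie (or an equivalent) construction so that the hard instances are already balanced at every internal vertex; but that is precisely the substantive content of the theorem, and you have not carried it out or verified that balanced gadgets with the required switching behaviour exist. So the proposal is a correct reduction \emph{framework} with the decisive step missing, not a proof.
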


By Theorem~\ref{thm2.1}, we will now show that the problem of determining whether $\tau_{S,r}(D)\geq \ell$ on Eulerian digraphs (and therefore general digraphs) is NP-complete. And this result implies the contents of Table~\ref{tab:Eulerian_digraphs}.

\begin{theorem}\label{thm2.2}
    Let $D$ be an Eulerian digraph and $S\subseteq V(D)$ with $|S|=k$ and $r\in S$. Let $k\geq 3$ and $\ell\geq 2$ be fixed integers. The problem of deciding if $\tau_{S,r}(D)\geq \ell$ is NP-complete.
\end{theorem}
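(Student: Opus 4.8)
## Proof Proposal

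The plan is to reduce from the Directed $2$-Linkage problem on Eulerian digraphs, which is NP-complete by Theorem~\ref{thm2.1}. Membership in NP is immediate: a collection of $\ell$ pairwise internally-disjoint pendant $(S,r)$-trees is a polynomial-size certificate that can be verified in polynomial time by checking arc-disjointness, the common vertex set condition $V(T_i)\cap V(T_j)=S$, and the pendant/degree constraints on each terminal. So the work is entirely in the hardness reduction.

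First I would take an instance of Directed $2$-Linkage on an Eulerian digraph $D$ with terminal pairs $((s_1,t_1),(s_2,t_2))$ and build an Eulerian digraph $D'$ together with a set $S$ of size $k$ and a root $r\in S$ such that $D$ has the two vertex-disjoint paths if and only if $\tau_{S,r}(D')\geq \ell$. The core idea is to force the $\ell$ pendant trees to be ``path-like'' gadgets anchored at prescribed terminals, so that the existence of $\ell$ internally-disjoint pendant trees encodes the vertex-disjoint linkage. Since we need pendant trees (each terminal has degree exactly one), the natural move is to route the required connections so that any pendant $(S,r)$-tree is forced to use an $s_i\to t_i$ path through the original graph $D$; the internal-disjointness condition (common vertices exactly $S$) then translates into vertex-disjointness of the two paths in $D$. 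I would attach auxiliary gadgets at the terminals and at $r$ to pad the construction up to $k$ terminals and $\ell$ trees without creating spurious solutions, and I would add reverse arcs or balancing gadgets as needed to restore the Eulerian property (equal in- and out-degree at every vertex), carefully checking that these additions do not create unintended pendant trees.

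The two directions of the equivalence are then verified as follows. For the forward direction, given vertex-disjoint paths $P_1,P_2$ in $D$, I would exhibit $\ell$ pendant $(S,r)$-trees in $D'$, using the paths to build the ``active'' trees and the padding gadgets to supply the remaining trees, and confirm they are pairwise arc-disjoint with pairwise intersection exactly $S$ and all terminals pendant. For the reverse direction, given $\ell$ internally-disjoint pendant $(S,r)$-trees in $D'$, I would argue that the gadget structure forces two of these trees to traverse $D$ from $s_1$ to $t_1$ and from $s_2$ to $t_2$ respectively, and that the internal-disjointness (common vertices exactly $S$) forces the corresponding two paths to be vertex-disjoint in $D$, recovering a solution to the linkage instance.

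The main obstacle I anticipate is the tension between three simultaneous constraints: keeping $D'$ Eulerian, enforcing the pendant degree-one condition at every terminal, and ruling out ``cheating'' pendant trees that solve the IDPSTP instance without corresponding to a genuine linkage. The pendant restriction is more delicate here than in ordinary Steiner-tree packing, because forcing each terminal to have degree exactly one in every tree sharply limits how arcs incident to terminals may be used, and the balancing arcs added for the Eulerian property must be designed so that they can never be assembled into an extra pendant tree. I expect the bulk of the proof to consist of a careful degree analysis at the gadget boundaries, showing that the only way to realize $\ell$ internally-disjoint pendant trees respects the intended one-to-one correspondence with linkages; handling the cases $k>3$ and $\ell>2$ by inert padding gadgets should then be routine once the base case $k=3,\ell=2$ is established.
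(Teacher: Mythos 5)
Your overall strategy is exactly the paper's: a reduction from Directed $2$-Linkage on Eulerian digraphs (Theorem~\ref{thm2.1}), with padding gadgets to reach $k$ terminals and $\ell$ trees, plus the observation that NP membership is immediate. However, the proposal stops at the level of a plan: the gadget itself is never constructed, and in this proof the gadget \emph{is} the content. You correctly identify the three-way tension (Eulerian balance, the degree-one condition at terminals, and ruling out cheating trees), but you do not resolve it, and "I would attach auxiliary gadgets \dots and add reverse arcs or balancing gadgets as needed" is precisely the step where such a reduction can fail. In particular, adding balancing arcs as an afterthought is dangerous for exactly the reason you flag: stray arcs into the terminal set can be assembled into extra pendant trees. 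The paper avoids a separate balancing pass entirely by building the gadget so that every added arc is either one of an anti-parallel pair or lies on a short directed closed walk (e.g.\ $r\to s_1\to u_2\to r$ and $r\to s_2\to u_1\to r$), so every new vertex is balanced by construction.

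Concretely, the paper's gadget takes $S=\{r,u_1,u_2\}\cup U$ with $U=\{u_3,\dots,u_{k-1}\}$ padding the terminal count and $V=\{v_1,\dots,v_{\ell-2}\}$ padding the tree count, and wires $r\to s_1\to u_2\to r$, $r\to s_2\to u_1\to r$, anti-parallel pairs between $t_i$ and $u_i$ and between $t_i$ and $U$, and anti-parallel pairs joining $r$, $V$, and $U$. The reverse direction then follows from two exact degree counts that your sketch does not supply: (a) $d^+(r)=\ell$, so each of the $\ell$ trees uses exactly one out-arc of $r$ and exactly two of them avoid $V$; and (b) $d^-(u_i)=2$ with in-arcs only from $t_i$ and $s_{3-i}$, which forces the two surviving trees to contain $\{s_1,t_1\}$ and $\{s_2,t_2\}$ respectively and hence to contain an $(s_1,t_1)$-path and an $(s_2,t_2)$-path in $D^*$; internal disjointness makes these paths vertex-disjoint. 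Also note one inaccuracy in your sketch: it is not the case that \emph{every} pendant $(S,r)$-tree is forced through $D^*$ --- the $\ell-2$ padding trees route through $V$ and never touch $D^*$; only the two trees rooted at the arcs $rs_1,rs_2$ carry the linkage. Without these explicit counts the equivalence cannot be verified, so as written the argument is incomplete.
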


\begin{proof}
    Let $D^{*}$ be an Eulerian digraph and $s_{1},s_{2},t_{1},t_{2}$ be four distinct vertices in $D^{*}$. We construct a new digraph $D$ (from $D^{*}$) with
    \begin{align*}
        V(D)=&V(D^{*})\cup U\cup V\cup \{r,u_{1},u_{2}\},
    \end{align*}
    where $U=\{u_{3},u_{4},\cdots,u_{k-1}\}$ and $V=\{v_{i}\mid i\in [\ell-2]\}$. The arc-set of $D$ is defined as
    \begin{align*}
        A(D)=&A(D^{*})\cup\{rs_{1},rs_{2},t_{1}u_{1},u_{1}t_{1},t_{2}u_{2},u_{2}t_{2},s_{1}u_{2},s_{2}u_{1},u_{1}r,u_{2}r\}\\
             &\cup\{rv,vr,vu,uv\mid v\in V\mbox{ and }u\in U\}\\
             &\cup\{t_{i}u,ut_{i}\mid i\in[2]\mbox{ and }u\in U\}.
    \end{align*}

    \begin{figure}
        \centering
        \begin{tikzpicture}
            \draw (-0.5,-2) rectangle (4.5,1);
            \node[circle,draw,fill=white,inner sep=5pt,line width=0.8pt] (s1) at (0,0) {};
            \node[circle,draw,fill=white,inner sep=5pt,line width=0.8pt] (s2) at (0,-1) {};
            \node[circle,draw,fill=white,inner sep=5pt,line width=0.8pt] (t1) at (4,0.5) {};
            \node[circle,draw,fill=white,inner sep=5pt,line width=0.8pt] (t2) at (4,-1.5) {};
            \node at (2,-1.5) {$D^{*}$};
        
            \draw (6,-1.5) rectangle (7,0.5);
            \node[circle,draw,fill=white,inner sep=5pt,line width=0.8pt] (r) at (-2,-0.5) {};
            \node[circle,draw,fill=white,inner sep=5pt,line width=0.8pt] (u1) at (6.5,0) {};
            \node[circle,draw,fill=white,inner sep=5pt,line width=0.8pt] (u2) at (6.5,-1) {};

            \draw (-0.5,-3.5) rectangle (2.5,-4.5);
            \node[circle,draw,fill=white,inner sep=5pt,line width=0.8pt] (v1) at (0,-4) {};
            \node[circle,draw,fill=white,inner sep=5pt,line width=0.8pt] (v2) at (0.7,-4) {};
            \node[circle,draw,fill=white,inner sep=7pt,line width=0.8pt] (v3) at (2,-4) {};
            \node at (1.35,-4) {$\cdots$};
            \node at (1,-5) {$V$};
            
            \draw (4,-3.5) rectangle (7,-4.5);
            \node[circle,draw,fill=white,inner sep=5pt,line width=0.8pt] (u3) at (4.5,-4) {};
            \node[circle,draw,fill=white,inner sep=5pt,line width=0.8pt] (u4) at (5.2,-4) {};
            \node[circle,draw,fill=white,inner sep=7pt,line width=0.8pt] (u5) at (6.5,-4) {};
            \node at (5.85,-4) {$\cdots$};
            \node at (6,-5) {$U$};

            \draw (-2.5,-5.5) rectangle (7.5,2.5);
            \node at (7,1.5) {$D$};

            \draw[->] (r) -- (s1);
            \draw[->] (r) -- (s2);
            \draw[->] (s1) -- (u2);
            \draw[->] (s2) -- (u1);
            \draw[<->] (t1) -- (u1);
            \draw[<->] (t2) -- (u2);
            \draw[->] (u2) to [bend left=60] (r); 
            \draw[->] (u1) to [bend right=60] (r);
            \draw[<->] (t1) -- (6.5,-3.5);
            \draw[<->] (t2) -- (4.5,-3.5);
            \draw[<->] (r) to [bend right=30] (-0.5,-4);
            \draw[<->] (2.5,-4) -- (4,-4);
            \draw[<->] (1,-3.5) -- (6.7,-1.5);

            \node at (r) {$r$};
            \node at (s1) {$s_{1}$};
            \node at (s2) {$s_{2}$};
            \node at (t1) {$t_{1}$};
            \node at (t2) {$t_{2}$};
            \node at (u1) {$u_{1}$};
            \node at (u2) {$u_{2}$};
            \node at (v1) {$v_{1}$};
            \node at (v2) {$v_{2}$};
            \node at (v3) {$v_{\ell-2}$};
            \node at (u3) {$u_{3}$};
            \node at (u4) {$u_{4}$};
            \node at (u5) {$u_{k - 1}$};
        \end{tikzpicture}
        \caption{The Eulerian digraph $D$}
        \label{fig:thm2.2}
    \end{figure}
    
    This completes the construction of $D$ as shown in Figure \ref{fig:thm2.2}. Clearly, $D$ is Eulerian. Let $S=\{r, u_1, u_2\}\cup U$ and let $r$ be the root. We will show that $\tau_{S,r}(D)\geq \ell$ if and only if $[D^{*};s_{1},s_{2},t_{1},t_{2}]$ is a positive instance of the problem of Directed 2-Linkage on Eulerian digraphs, and then the result holds by Theorem~\ref{thm2.1}.

    First assume that $[D^{*};s_{1},s_{2},t_{1},t_{2}]$ is a positive instance of Directed 2-Linkage on Eulerian digraphs, that is, we have two vertex-disjoint paths, $P_{1}$ and $P_{2}$, such that $P_{i}$ is an $(s_{i},t_{i})$-path, for $i\in [2]$. 
    For each $i\in [\ell -2]$, let $T_{i}$ be the union of the arc $rv_{i}$ and all the arcs from $v_{i}$ to $\{u_{1},u_{2}\}\cup U$. Let $T_{\ell-1}$ be the union of the arcs $rs_{1}$ and $s_{1}u_{2}$, all the arcs from $t_{1}$ to $\{u_{1}\}\cup U$, and the path $P_{1}$. Let $T_{\ell}$ be the union of the arcs $rs_{2}$ and $s_{2}u_{1}$, all the arcs from $t_{2}$ to $\{u_{2}\}\cup U$, and the path $P_{2}$. It can be checked that these trees are $\ell$ pairwise internally-disjoint pendant $(S,r)$-trees, which means that $\tau_{S,r}(D)\geq \ell$.

    Conversely, assume that $\tau_{S,r}(D)\geq \ell$ and let $T_{1}',T_2',\cdots,T_{\ell}'$ be $\ell$ pairwise internally-disjoint pendant $(S,r)$-trees in $D$. By the definition of a pendant $(S,r)$-tree and the fact that the out-degree of $r$ is $\ell$, each $T_{i}'$ contains precisely one vertex from $\{s_1,s_2\}\cup V$. Hence, if we delete all vertices from $V$, then there are two pendant $(S,r)$-trees left, say $T_{1}'$ and $T_{2}'$. At this time, $u_{1}$ has exactly two in-arcs, we may assume $t_{1}u_{1}\in A(T_{1}')$ (that is, $t_{1}\in V(T_{1}')$) and $s_{2}u_{1}\in A(T_{2}')$ (that is, $s_{2}\in V(T_{2}')$). Since $T_{1}'$ and $T_{2}'$ are pendant $(S,r)$-trees and $r$ has exactly two out-arcs now, we have $s_{1}\in V(T_{1}')$. Thus, there exists a path from $s_{1}$ to $t_{1}$, namely $P_{1}'$. Analogously, we have $t_{2}\in V(T_{2}')$, and there exists a path from $s_{2}$ to $t_{2}$, namely $P_{2}'$. Observe that $P_{1}'$ and $P_{2}'$ are vertex disjoint as $T_{1}'$ and $T_{2}'$ are internally-disjoint. This implies that $[D^{*};s_{1},s_{2},t_{1},t_{2}]$ is a positive instance of Directed 2-Linkage on Eulerian digraphs, as desired.
\end{proof}


\subsection{Symmetric digraphs}

We turn our attention to symmetric digraphs now. Recall that a connected symmetric digraph is Eulerian. However, unlike Eulerian digraphs, the problem of deciding whether $\tau_{S,r}(D)\geq \ell$ on symmetric digraphs is polynomial-time solvable, if $|S|=k\geq 3$ and $\ell\geq 2$ are both fixed integers. Before presenting the proof, we first need the following results by Sun and Zhang.

\begin{lemma}\cite{Sun-Zhang}\label{lem2.1}
Let $D$ be a symmetric digraph with $S \subseteq V(D)$. Let $\{s_i, t_i\mid i\in [p]\}\subseteq V(D)$. The following problem can be solved in time $f(p)|V(D)|^2$ for some function $f(p)$: decide whether there is an $(s_i,t_i)$-path, say $P_i$, for each $i\in [p]$, such that none of the internal vertices of $P_i$ are contained in  $S\cup \bigcup_{j\in [p]\setminus\{i\}}V(P_j)$.
\end{lemma}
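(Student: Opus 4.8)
The plan is to reduce the problem to the classical vertex-disjoint paths problem in an undirected graph of size $O(|V(D)|)$ and then to invoke a quadratic-time algorithm for the latter. First I would pass to the underlying undirected graph $G$ of $D$: since $D$ is symmetric, every arc comes with its reverse, so a directed $(s_i,t_i)$-path in $D$ is the same object as an undirected $s_i$--$t_i$ path in $G$, and we may work entirely in $G$.

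The key structural observation is that the two side conditions collapse into a single one. Write $R=\{s_i,t_i\mid i\in[p]\}$ for the set of terminals and set $M=V(G)\setminus(S\cup R)$. If the desired paths $P_1,\dots,P_p$ exist, then every internal vertex of $P_i$ avoids $S$ by hypothesis and avoids $V(P_j)\supseteq\{s_j,t_j\}$ for each $j\neq i$; together with the fact that an internal vertex is never an endpoint of its own path, this forces every internal vertex of every $P_i$ to lie in $M$, with these interiors pairwise disjoint. Thus the problem is equivalent to asking for paths whose interiors lie in $M$ and are pairwise disjoint, the paths being allowed to meet only at terminals in $R$.

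Next I would eliminate the endpoint-sharing in order to reach a genuine vertex-disjoint paths instance. Construct an auxiliary graph $H$ whose vertex set is $M$ together with, for each index $i$, private copies $s_i^{(i)}$ and $t_i^{(i)}$ of the two terminals of $P_i$; join two $M$-vertices exactly when they are adjacent in $G$, join a copy $x^{(i)}$ to an $M$-vertex when $x$ is adjacent to it in $G$, and join two copies $x^{(i)},y^{(i)}$ of the same index when $xy\in E(G)$. Copies of different indices are never adjacent, and no vertex of $S$ that is not a terminal appears in $H$; hence every $s_i^{(i)}$--$t_i^{(i)}$ path in $H$ automatically has its interior in $M$, and a family of such paths is pairwise vertex-disjoint in $H$ if and only if the corresponding family in $G$ has the required disjointness and $S$-avoidance. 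Since there are at most $2p$ copies, $|V(H)|\leq|V(D)|+2p=O(|V(D)|)$ for fixed $p$.

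Finally I would decide the $p$ vertex-disjoint paths problem on $H$ for the pairs $(s_i^{(i)},t_i^{(i)})$ using the quadratic-time algorithm for a fixed number of disjoint paths~\cite{Kawarabayashi-Kobayashi-Reed}; this runs in $g(p)\,|V(H)|^2=f(p)\,|V(D)|^2$ time, as claimed. I expect the main obstacle to be verifying correctness of the reduction in the degenerate cases---when terminals coincide (for instance $s_i=t_j$ or $s_i=s_j$), when $s_it_i\in E(G)$ so that $P_i$ is a single edge, or when a terminal also lies in $S$---since one must check that the copy construction neither creates a spurious path through a shared terminal nor forbids a legitimate one. Everything else is bookkeeping: building $H$ takes linear time, and the disjoint-paths subroutine supplies the stated running time.
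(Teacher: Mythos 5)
This statement is quoted from \cite{Sun-Zhang} and the paper supplies no proof of its own, so there is nothing internal to compare your argument against; I can only judge it on its merits, and on those it is sound. Your route --- pass to the underlying undirected graph (legitimate because $D$ is symmetric), observe that the side conditions force all interiors into $M=V(G)\setminus(S\cup R)$ with pairwise-disjoint interiors, split each terminal into a private copy per index so that shared endpoints do not violate vertex-disjointness, and then invoke the $O(n^2)$ fixed-$p$ disjoint-paths algorithm of \cite{Kawarabayashi-Kobayashi-Reed} --- is exactly the standard way such lemmas are established in this line of work (the analogous lemma in \cite{Sun-Yeo} is proved the same way), and the reduction is correct in both directions: a path $P_i'$ in $H$ cannot pass through a copy $s_j^{(j)}$ or $t_j^{(j)}$ with $j\neq i$ because $P_j'$ must occupy that copy as an endpoint, so every solution in $H$ maps back to one in $G$ with interiors in $M$, and conversely. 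The degenerate cases you flag are all benign: coincident terminals are separated by the copies, a single-edge $P_i$ is handled by the edge $s_i^{(i)}t_i^{(i)}$, a terminal lying in $S$ is unproblematic since only \emph{internal} vertices must avoid $S$, and the case $s_i=t_i$ can be discarded in preprocessing since such a path has no internal vertices and its single vertex lies in $R$, which all interiors avoid anyway. The only cosmetic slip is the claim that building $H$ takes linear time; it takes up to $O(|V(D)|^2)$, which is still within the stated budget.
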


Now, by Lemma \ref{lem2.1}, we give the following Theorem which shows the polynomiality for $\tau_{S,r}(D)$ on symmetric digraphs in Table~\ref{tab:symmetric_digraphs}. Note that $O(n^{\ell(k-2)+2}\cdot k^{\ell(2k-3)})$ is a polynomial in $n$ when both $k$ and $\ell$ are fixed integers.

\begin{theorem}\label{thm2.3}
    Let $D$ be a symmetric digraph and $S\subseteq V(D)$ with $|S|=k$ and $r\in S$. Let $k\geq 3$ and $\ell\geq 2$ be fixed integers. The problem of deciding if $\tau_{S,r}(D)\geq \ell$ can be solved in time $O(n^{\ell(k-2)+2}\cdot k^{\ell(2k-3)})$.
\end{theorem}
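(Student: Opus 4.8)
The plan is to reduce the decision problem $\tau_{S,r}(D)\geq \ell$ to a bounded number of instances of the routing problem solved in Lemma~\ref{lem2.1}, exploiting the fact that a pendant $(S,r)$-tree has a very rigid shape when $k$ and $\ell$ are fixed. Recall that in a pendant $(S,r)$-tree $T$, every vertex of $S\setminus\{r\}$ is a leaf with in-degree one, and $r$ has in-degree zero. So the ``core'' of $T$ is the part spanned by the internal (Steiner) vertices together with $r$, and each terminal in $S\setminus\{r\}$ hangs off this core by a single arc from some internal vertex (or directly from $r$). First I would argue that, after suppressing degree-two Steiner vertices, the combinatorial structure of the internal part of a pendant $(S,r)$-tree is one of a bounded number of ``topological types'': an out-tree on at most $k-1$ internal branch points whose leaves are (or feed directly into) the $k-1$ terminals $S\setminus\{r\}$. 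The number of such abstract shapes is bounded by a function of $k$ alone, which accounts for the $k^{\ell(2k-3)}$ factor in the running time.

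Next I would turn each candidate configuration into a path-routing instance. Concretely, I would guess, for each of the $\ell$ trees and each terminal, the branch structure: that is, which internal branch points each tree uses and how the $k-1$ terminals attach to them. Since each tree has at most $k-1$ terminals and hence at most $k-2$ internal branch points, and since these branch points can be guessed among the $n$ vertices of $D$, guessing the branch points across all $\ell$ trees costs roughly $n^{\ell(k-2)}$ choices; the extra $n^2$ in the exponent absorbs lower-order bookkeeping (for instance the placement of $r$'s immediate structure). Once all branch points are fixed, what remains is to realize the tree arcs as internally-disjoint paths: each edge of each abstract tree must be realized by a directed path in $D$ whose internal vertices avoid $S$ and avoid every other path. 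This is exactly a simultaneous disjoint-paths problem of the form handled by Lemma~\ref{lem2.1}, with $p$ bounded by the total number of tree-edges, $p\leq \ell(2k-3)$, which is a constant. Thus for each guessed configuration a single call to the algorithm of Lemma~\ref{lem2.1} decides feasibility in time $f(p)\,n^2$.

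Putting this together, I would loop over all guessed configurations (at most $O(n^{\ell(k-2)}\cdot k^{\ell(2k-3)})$ of them, after folding the constant number of topological types into the $k^{\ell(2k-3)}$ factor), and for each run the $O(n^2)$ routing check of Lemma~\ref{lem2.1}; $\tau_{S,r}(D)\geq \ell$ holds if and only if at least one configuration passes. This yields the claimed bound $O(n^{\ell(k-2)+2}\cdot k^{\ell(2k-3)})$. The correctness direction that requires care is the equivalence between the existence of $\ell$ internally-disjoint pendant $(S,r)$-trees and the existence of a successful guess: one has to check that the disjointness condition for pendant trees (arc-disjoint, with common vertex set exactly $S$) corresponds precisely to the internal-vertex-disjointness and $S$-avoidance guaranteed by Lemma~\ref{lem2.1}, and that suppressing degree-two vertices does not lose any solution.

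The main obstacle I expect is bounding the number of internal branch points and hence the exponent correctly. The subtlety is that a priori an internally-disjoint pendant $(S,r)$-tree could use many Steiner vertices, so the reduction to a constant-size routing instance relies crucially on the observation that, after suppressing degree-two internal vertices, the number of structurally relevant internal vertices per tree is at most $k-2$ (one fewer than the number of leaves in a tree with $k-1$ leaves). Making this suppression argument rigorous while ensuring the resulting paths remain internally disjoint and $S$-avoiding---so that Lemma~\ref{lem2.1} applies verbatim---is the step that carries the real content; the counting of configurations and the invocation of the lemma are then routine.
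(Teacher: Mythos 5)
Your proposal is correct and follows essentially the same route as the paper: enumerate, for each of the $\ell$ trees, a ``skeleton'' consisting of the at most $k-2$ branch vertices together with the abstract arc structure on $R\cup S$ (giving the $n^{\ell(k-2)}\cdot k^{\ell(2k-3)}$ enumeration), and then realize each skeleton arc by a path via Lemma~\ref{lem2.1} in $O(n^2)$ time per configuration. The only minor slips are the attribution of the $n^2$ factor to ``bookkeeping'' rather than to the Lemma~\ref{lem2.1} call, and the leaf count (a minimal pendant $(S,r)$-tree has $k$ degree-one vertices including $r$, whence at most $k-2$ branch points), neither of which affects the argument.
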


\begin{proof}
    Without loss of generality, we may assume that all pendant $(S,r)$-trees are minimal (that is, the set of vertices with degree one is exactly $S$). Let $T$ be any pendant $(S,r)$-tree in $D$. Let $R$ be all vertices in $T$ with degree at least three. Observe that $|R|\leq |S|-2$, and $d_{T}^{+}(x)=d_{T}^{-}(x)=1$ holds for every vertex $x\in V(T)\backslash (R\cup S)$. Let $T^{s}$ be the {\em skeleton} of $T$, which is obtained from $T$ by contracting all vertices in $V(T)\backslash (R\cup S)$. Note that $V(T^{s})=R\cup S$. Thus, there are at most $n^{k-2}$ possibilities for $V(T^{s})$ and less than $k^{2k-3}$ possibilities for the arcs of $T^{s}$, and so there are at most $n^{k-2}\cdot k^{2k-3}$ different skeletons of pendant $(S,r)$-trees in $D$.

    Our algorithm will try all possible $\ell$-tuples, $\mathcal{T}^{s}=(T_{1}^{s},T_{2}^{s},\cdots,T_{\ell}^{s})$, of skeletons of pendant $(S,r)$-trees and determine if there is a set of $\ell$ pairwise internally-disjoint pendant $(S,r)$-trees,  say $\{T_i\mid i\in [\ell]\}$, such that $T_{i}^{s}$ is the skeleton of $T_{i}$. If such a set of trees exists for a certain $\mathcal{T}^{s}$, then we return this solution, otherwise, we return that no solution exists. We will first prove that this algorithm gives the correct answer and then compute its time complexity.

    If our algorithm returns a solution, then clearly a solution exists. So now assume that a solution exists and let $\{T_i\mid i\in [\ell]\}$ be the desired set of pairwise internally-disjoint pendant $(S,r)$-trees. When we consider $\mathcal{T}^{s}=(T_{1}^{s},T_{2}^{s},\cdots,T_{\ell}^{s})$, where $T_{i}^{s}$ is the skeleton of $T_{i}$, our algorithm will find a solution, so the algorithm always returns a solution if one exists.

    Given such an $\ell$-tuples, $\mathcal{T}^{s}$, we need to determine if there exists a set of $\ell$ pairwise internally-disjoint pendant $(S,r)$-trees, $\{T_i\mid i\in [\ell]\}$, such that $T_{i}^{s}$ is the skeleton of $T_{i}$. We first check that no vertices in $V(D)\backslash S$ belongs to more than one skeleton. If the above does not hold then the desired trees do not exist, so assume the above holds. By Lemma \ref{lem2.1}, for every arc $uv\notin A(D)$ that belongs to some skeleton $T_{i}^{s}$, we can find a $(u,v)$-path in $D-A(D[S])$, such that no internal vertex on any path belongs to $S$ or to a different path, and this can be done in $O(n^{2})$ time. If such paths exist, then we can obtain the desired pendant $(S,r)$-trees by substituting the arcs $uv$ by the $(u,v)$-paths. Otherwise, the desired pendant $(S,r)$-trees do not exist. 
    
    Now, we analyze the time complexity. The number of different $\ell$-tuples, $\mathcal{T}^{s}$, that we need to consider is bounded by the function $(n^{k-2}\cdot k^{2k-3})^{\ell}$. Therefore, the algorithm has time complexity $O(n^{\ell(k-2)+2}\cdot k^{\ell(2k-3)})$.
\end{proof}


We now consider the NP-complete cases in Table~\ref{tab:symmetric_digraphs}. Chen, Li, Liu and Mao \cite{Chen-Li-Liu-Mao} introduced the following problem and proved that it is NP-complete.

\textbf{CLLM problem}: Given a tripartite graph $G=(V, E)$ with a
3-partition $(A,B,C)$ such that $|A|=|B|=|C|=q$, decide whether
there is a partition of $V$ into $q$ disjoint 3-sets $V_1, \dots,
V_q$ such that for every $V_i= \{a_{i_1}, b_{i_2}, c_{i_3}\}$, we have
$a_{i_1} \in A, b_{i_2} \in B, c_{i_3} \in C$, and the induced subgraph $G[V_i]$ of $G$ is connected.


\begin{lemma}\label{lem2.2}\cite{Chen-Li-Liu-Mao}
    The CLLM problem is NP-complete.
\end{lemma}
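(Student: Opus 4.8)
The plan is to prove both that CLLM lies in NP and that it is NP-hard. Membership is routine: a certificate is the partition $V_1,\dots,V_q$ itself, and in polynomial time one checks that the $V_i$ are pairwise disjoint, cover $V$, contain exactly one vertex from each of $A,B,C$, and that each induced subgraph $G[V_i]$ is connected. For a triple this last condition simply means that at least two of its three possible edges are present, so the check is immediate. Hence the real work is the hardness reduction, and I would reduce from \emph{$3$-Dimensional Matching} ($3$DM): given disjoint $X,Y,Z$ with $|X|=|Y|=|Z|=q$ and a family $M\subseteq X\times Y\times Z$, decide whether $M$ contains $q$ triples covering every element exactly once.

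The naive first attempt maps $x_i,y_j,z_k$ to vertices $a_i\in A$, $b_j\in B$, $c_k\in C$, and for each $(x_i,y_j,z_k)\in M$ installs the edges that make $\{a_i,b_j,c_k\}$ a connected triple; a partition into connected triples would then select, for each element, one incident member of $M$. The decisive step is to arrange the edge set so that the connected triples of $G$ correspond \emph{exactly} to the members of $M$, since only then does a balanced partition of $V$ encode a perfect matching. Because the three parts have equal size $q$, covering all $3q$ vertices by $q$ connected triples forces each vertex to be used exactly once, which is precisely the exact-cover flavour we want to exploit.

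The main obstacle is controlling connectivity. Installing connections only for members of $M$ on the raw element vertices creates \emph{spurious} connected triples: $\{a_i,b_j,c_k\}$ becomes connected as soon as two of its three pairs are individually joined by other members of $M$ (the path through the shared vertex already connects it), even when $(x_i,y_j,z_k)\notin M$. Blowing up each element into one private copy per incident triple, and realizing each $t\in M$ as a private triangle on its three copies, removes the spurious triples; but if done naively this turns $G$ into a disjoint union of triangles and trivializes the instance, since then any partition exists. The heart of the construction is therefore a \emph{selection gadget} attached to each element that (i) offers, for each incident triple, a private connected triple expressing ``this triple covers this element,'' (ii) forces exactly one of them to be used while the remaining copies are cleanly absorbed into dedicated connected triples, and (iii) introduces no connected triple spanning two different gadgets. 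Reconciling the tension between killing spurious triples (which favours sparse, decoupled gadgets) and re-imposing the matching constraint (which requires genuine choices and linkages), while padding $A,B,C$ with dummy triples to a common size, is where I expect the difficulty to concentrate.

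Finally I would verify both directions. Given a perfect matching $M'\subseteq M$, one builds a partition by routing, for each element, its chosen copy through the private triangle of the selected triple and all remaining copies through their garbage triples, and by using the dummy triples for the padding. Conversely, from any partition of $V$ into connected triples, the degree restrictions imposed by the blow-up and the gadget design force exactly one ``real'' triangle per element, and the corresponding triples form a perfect matching of $M$. I anticipate that the gadget design and the careful bookkeeping that keeps $|A|=|B|=|C|$ while provably blocking every cross-gadget connected triple will be the delicate parts, after which the two implications follow in a straightforward manner.
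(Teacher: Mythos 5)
This lemma is not proved in the paper at all: it is quoted from the reference [Chen--Li--Liu--Mao], so there is no in-paper argument to compare yours against. Judged on its own terms, your proposal is a reduction plan from $3$-Dimensional Matching rather than a proof. The NP-membership part is fine (a $3$-vertex induced subgraph is connected iff it has at least two edges, so the certificate check is immediate), and you correctly diagnose the central obstacle --- that installing edges only for the triples of $M$ creates spurious connected triples $\{a_i,b_j,c_k\}$ with $(x_i,y_j,z_k)\notin M$, connected through a shared vertex --- as well as the fact that a naive blow-up into one private triangle per triple trivializes the instance. But the object that would carry the entire proof, the ``selection gadget'' satisfying your conditions (i)--(iii) together with the padding that keeps $|A|=|B|=|C|$, is never constructed; you explicitly defer it as the place ``where the difficulty will concentrate.''

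This is a genuine gap, not a cosmetic one, because both directions of the equivalence depend on the unspecified gadget. For soundness (partition $\Rightarrow$ perfect matching) you must show that every connected triple of the final graph is either a designated ``real'' triangle, a garbage triple, or a dummy triple, and that the garbage structure attached to an element with $d$ copies can absorb exactly $d-1$ of them --- no more and no fewer --- in any valid partition; if it can absorb all $d$ or only $d-2$, the extracted set of triples fails to be a perfect matching. For completeness (perfect matching $\Rightarrow$ partition) you must exhibit the routing of the unused copies through the garbage triples and verify the counts balance across $A$, $B$ and $C$. None of this is checkable from what you have written. To close the gap you would need to write down the gadget explicitly (a cyclic absorber of the kind used in standard $3$DM/X3C reductions would work), enumerate the connected triples it creates, and verify the two implications against that enumeration --- or simply cite the construction of Chen, Li, Liu and Mao, which is what the paper does.
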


By Lemma~\ref{lem2.2}, we obtain the following result on symmetric digraphs. Note that we use the construction from the proof of Theorem 3.6 in \cite{Sun-Yeo}.


\begin{theorem}\label{thm2.4}
    Let $D$ be a symmetric digraph and $S\subseteq V(D)$ with $|S|=k$ and $r\in S$. Let $k\geq 3$ be a fixed integer. The problem of deciding if $\tau_{S,r}(D)\geq \ell$ ($\ell\geq 2$ is part of the input) is NP-complete.
\end{theorem}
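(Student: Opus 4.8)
The plan is to prove membership in NP and then NP-hardness by a polynomial reduction from the CLLM problem (Lemma~\ref{lem2.2}), following the construction used in the proof of Theorem 3.6 of \cite{Sun-Yeo}. Membership is immediate: a certificate consisting of $\ell$ subgraphs can be verified in polynomial time, checking that each is a pendant $(S,r)$-tree, that they are pairwise arc-disjoint, and that their pairwise vertex intersections are exactly $S$. The work is therefore in the hardness reduction.

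Given a CLLM instance, i.e.\ a tripartite graph $G$ with tripartition $(A,B,C)$ and $|A|=|B|=|C|=q$, I would build a symmetric digraph $D$ as follows. Keep every vertex of $G$ and replace each edge of $G$ by the two opposite arcs. Add a root $r$ together with terminal leaves $w_{1},w_{2},z_{1},\dots,z_{k-3}$, and set $S=\{r,w_{1},w_{2},z_{1},\dots,z_{k-3}\}$, so that $|S|=k$. Then insert the symmetric arcs $ra$ for every $a\in A$, the arcs $bw_{1}$ for every $b\in B$, the arcs $cw_{2}$ for every $c\in C$, and the arcs $az_{j}$ for every $a\in A$ and $j\in[k-3]$. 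Finally set $\ell=q$. This construction is polynomial in the input size, and the claim to prove is that $\tau_{S,r}(D)\geq \ell$ if and only if the CLLM instance is a yes-instance.

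For the two directions I would argue as follows. \emph{Forward:} from a partition into connected triples $\{a_{i},b_{i},c_{i}\}$, orient a spanning out-tree of $G[\{a_{i},b_{i},c_{i}\}]$ with source $a_{i}$ (possible since every edge of $G$ is available in both directions), prepend the arc $ra_{i}$, and append the leaf-arcs $b_{i}w_{1}$, $c_{i}w_{2}$ and $a_{i}z_{j}$; disjointness of the triples yields $q$ pendant $(S,r)$-trees meeting pairwise only in $S$. \emph{Backward:} the crux is that $r$ has out-neighbourhood exactly $A$, $w_{1}$ has in-neighbourhood exactly $B$, and $w_{2}$ has in-neighbourhood exactly $C$, so every pendant $(S,r)$-tree must use at least one vertex from each of $A$, $B$, $C$. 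Since the only non-terminal vertices of $D$ lie in $A\cup B\cup C$ and $q$ internally-disjoint trees use pairwise-disjoint internal vertices, a tight counting argument ($3q$ internal vertices available, at least $3$ needed per tree, $q$ trees) forces each tree to use exactly one vertex from each part. Hence the internal vertex sets partition $A\cup B\cup C$ into triples with one vertex per part, and connectivity of each tree restricted to its own triple forces $G[\{a_{i},b_{i},c_{i}\}]$ to be connected, recovering a valid CLLM partition.

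I expect the backward direction to be the main obstacle, and within it three points need care: verifying that the in/out-neighbourhood restrictions genuinely rule out ``shortcut'' trees and force every tree to touch all three parts; making the counting tight enough to exclude a tree absorbing two vertices of one part while another tree takes none; and confirming that the arcs incident with the shared terminals $w_{1},w_{2},z_{1},\dots,z_{k-3}$ remain distinct across trees, so that arc-disjointness and the ``meet only in $S$'' condition both hold. Finally, I would remark that the padding terminals $z_{1},\dots,z_{k-3}$, each reached through the (pairwise distinct) $A$-vertices, let the same argument go through for every fixed $k\geq 3$, the case $k=3$ being obtained by simply omitting them.
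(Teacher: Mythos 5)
Your proposal is correct and follows essentially the same route as the paper: a reduction from the CLLM problem in which the root is joined symmetrically to $A$, dedicated terminals are joined to $B$ and to $C$, padding terminals handle general fixed $k\geq 3$, and the backward direction uses the tight count $|A|=|B|=|C|=\ell$ against $\ell$ internally-disjoint trees each forced to meet every part. The only (immaterial) difference is that you attach the padding terminals to $A$ rather than to $C$ as the paper does.
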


\begin{proof}
    Let $G$ be a tripartite graph with 3-partition $(A,B,C)$ such that $|A|=|B|=|C|=\ell$. We now construct a new digraph $D$ (from $G$) with
    \begin{align*}
        V(D)=&V(G)\cup S'\cup \{r\},
    \end{align*}
    where $S'=\{s_{i}\mid i\in [k-1]\}$. The arc-set of $D$ is defined as
    \begin{align*}
        A(D)=&\{xy,yx\mid xy\in E(G)\}\\
             &\cup\{ru,ur,s_{1}v,vs_{1}\mid u\in A~\mbox{and}~v\in B\}\\
             &\cup\{s_{i}w,ws_{i}\mid i=2,3,\cdots,k-1~\mbox{and}~w\in C\}.
    \end{align*}

    \begin{figure}
        \centering
        \begin{tikzpicture}
            \node[circle,draw,fill=white,inner sep=5pt,line width=0.8pt] (r) at (0,1.5) {};
            \draw (-2,-0.5) rectangle (2,0.5);
            \node[circle,draw,fill=white,inner sep=5pt,line width=0.8pt] (u1) at (-1.5,0) {};
            \node[circle,draw,fill=white,inner sep=5pt,line width=0.8pt] (u2) at (-0.8,0) {};
            \node at (-0.2,0) {$\cdots$};
            \node[circle,draw,fill=white,inner sep=5pt,line width=0.8pt] (ui) at (0.4,0) {};
            \node at (1.0,0) {$\cdots$};
            \node[circle,draw,fill=white,inner sep=5pt,line width=0.8pt] (uq) at (1.6,0) {};
            \node at (-2.5,0) {$A$};
        
            \node[circle,draw,fill=white,inner sep=5pt,line width=0.8pt] (s1) at (-2.5,-4) {};
            \draw (-4.5,-3) rectangle (-0.5,-2);
            \node[circle,draw,fill=white,inner sep=5pt,line width=0.8pt] (v1) at (-4,-2.5) {};
            \node[circle,draw,fill=white,inner sep=5pt,line width=0.8pt] (v2) at (-3.3,-2.5) {};
            \node at (-2.7,-2.5) {$\cdots$};
            \node[circle,draw,fill=white,inner sep=5pt,line width=0.8pt] (vj) at (-2.1,-2.5) {};
            \node at (-1.5,-2.5) {$\cdots$};
            \node[circle,draw,fill=white,inner sep=5pt,line width=0.8pt] (vq) at (-0.9,-2.5) {};
            \node at (-5,-2.5) {$B$};

            \draw (0.5,-3) rectangle (4.5,-2);
            \node[circle,draw,fill=white,inner sep=5pt,line width=0.8pt] (w1) at (1,-2.5) {};
            \node[circle,draw,fill=white,inner sep=5pt,line width=0.8pt] (w2) at (1.7,-2.5) {};
            \node at (2.3,-2.5) {$\cdots$};
            \node[circle,draw,fill=white,inner sep=5pt,line width=0.8pt] (wt) at (2.9,-2.5) {};
            \node at (3.5,-2.5) {$\cdots$};
            \node[circle,draw,fill=white,inner sep=5pt,line width=0.8pt] (wq) at (4.1,-2.5) {};
            \node at (5,-2.5) {$C$};

            \draw (1,-5) rectangle (4,-4);
            \node[circle,draw,fill=white,inner sep=5pt,line width=0.8pt] (s2) at (1.5,-4.5) {};
            \node[circle,draw,fill=white,inner sep=5pt,line width=0.8pt] (s3) at (2.2,-4.5) {};
            \node at (2.8,-4.5) {$\cdots$};
            \node[circle,draw,fill=white,inner sep=7pt,line width=0.8pt] (sk-1) at (3.5,-4.5) {};

            \draw[dash pattern=on 5pt off 3pt] (-5.25,-3.25) rectangle (5.25,0.75);
            \node at (4.3,0.3) {$\overleftrightarrow{G}$};
            \draw (-5.5,-5.5) rectangle (5.5,2);
            \node at (-4.5,-5) {$D$};

            \draw[<->] (r) -- (0,0.5);
            \draw[<->] (s1) -- (-2.5,-3);
            \draw[<->] (2.5,-4) -- (2.5,-3);
            \draw[<->] (ui) to [bend right=20] (vj);
            \draw[<->] (ui) to [bend left=20] (wt);
            \draw[<->] (vj) to [bend left=30] (wt);

            \node at (r) {$r$};
            \node at (s1) {$s_{1}$};
            \node at (s2) {$s_{2}$};
            \node at (s3) {$s_{3}$};
            \node at (sk-1) {$s_{k-1}$};
            \node at (u1) {$u_{1}$};
            \node at (u2) {$u_{2}$};
            \node at (ui) {$u_{i}$};
            \node at (uq) {$u_{\ell}$};
            \node at (v1) {$v_{1}$};
            \node at (v2) {$v_{2}$};
            \node at (vj) {$v_{j}$};
            \node at (vq) {$v_{\ell}$};
            \node at (w1) {$w_{1}$};
            \node at (w2) {$w_{2}$};
            \node at (wt) {$w_{t}$};
            \node at (wq) {$w_{\ell}$};
            \node [right] at (-3.6,-1.15) {$u_{i}v_{j}\in E(G)$};
            \node [right] at (2.1,-1.15) {$u_{i}w_{t}\in E(G)$};
            \node [right] at (-0.8,-1.4) {$v_{j}w_{t}\in E(G)$};
        \end{tikzpicture}
        \caption{The symmetric digraph $D$}
        \label{fig:thm2.4}
    \end{figure}

    This completes the construction of $D$ as shown in Figure \ref{fig:thm2.4}. Clearly, $D$ is symmetric. Let $S=\{r\}\cup S'$ and let $r$ be the root. We will show that $\tau_{S,r}(D)\geq \ell$ if and only if $G$ is a positive instance of the CLLM problem. 

First assume that $G$ is a positive instance of the CLLM problem, that is, there is a partition of $V(G)$ into $\ell$ disjoint 3-sets $V_1, \dots, V_{\ell}$ such that for every $V_i= \{a_{i_1}, b_{i_2}, c_{i_3}\}$, we have $a_{i_1} \in A, b_{i_2} \in B, c_{i_3} \in C$, and the induced subgraph $G[V_i]$ of $G$ is connected. For each $i\in [\ell]$, we construct a pendant $(S,r)$-tree $T_{i}$ which is the union of the arcs $ra_{i_{1}}, b_{i_{2}}s_{1}$, all arcs from $c_{i_{3}}$ to $S'\backslash \{s_{1}\}$, and a $(V_i, a_{i_1})$-tree in $D[V_i]$ (note that such a tree must exist since $G[V_{i}]$ is connected). It can be checked that $\{T_i\mid i\in [\ell]\}$ is a set of pairwise internally-disjoint pendant $(S,r)$-trees in $D$, as desired.


    Conversely, assume that $\tau_{S,r}(D)\geq \ell$ and let $\{T'_i\mid i\in [\ell]\}$ be a set of pairwise internally-disjoint pendant $(S,r)$-trees in $D$. By the construction of $A(D)$ and the fact that $|A|=|B|=|C|=\ell$, we have $$|V(T'_{i})\cap A|=|V(T'_{i})\cap B|=|V(T'_{i})\cap C|=1$$ for each $i\in [\ell]$. Moreover, let $$V(T'_{i})\cap A=\{a_{i}\}, V(T'_{i})\cap B=\{b_{i}\}, V(T'_{i})\cap C=\{c_{i}\}.$$ Observe that $G[a_{i},b_{i},c_{i}]$ is connected for each $i\in [\ell]$. Thus, $G$ is a positive instance of the CLLM problem. By the above argument and Lemma~\ref{lem2.2}, we complete the proof.
\end{proof}

In order to prove the case that $k$ is part of input and $\ell\geq 2$ is a constant, we need the complexity of the following problem.

\textbf{2-Coloring Hypergraph}: Let $H$ be a hypergraph with vertex set $V(H)$ and edge set $E(H)$, determine whether there exists a 2-coloring of $V(H)$ such that each hyperedge of $E(H)$ contains vertices of both colors.

Lovász proved the NP-hardness of 2-Coloring Hypergraph.

\begin{theorem}\label{thm2.5}\cite{Lovász}
    The problem of 2-Coloring Hypergraph is NP-hard.
\end{theorem}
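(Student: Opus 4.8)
The plan is to establish NP-hardness by a polynomial reduction from \textsc{3-SAT}, the canonical NP-complete problem. Given a 3-CNF formula $\phi$ with variables $x_1,\dots,x_n$ and clauses $C_1,\dots,C_m$, I would build a hypergraph $H$ as follows. Introduce two vertices $x_i$ and $\bar{x}_i$ for each variable (thought of as a literal and its negation), together with one extra global vertex $f$ that plays the role of the truth value \emph{false}. For each variable add a size-two hyperedge $\{x_i,\bar{x}_i\}$, and for each clause $C_j=(\ell_{j_1}\vee \ell_{j_2}\vee \ell_{j_3})$ add a size-four hyperedge consisting of the three literal-vertices of $C_j$ together with $f$. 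This construction is clearly computable in time polynomial in the size of $\phi$.

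The correctness hinges on reading a $2$-coloring as a truth assignment, with the intended semantics that a vertex is \emph{true} precisely when it receives the color opposite to that of $f$. First I would observe that a size-two hyperedge $\{x_i,\bar{x}_i\}$ is non-monochromatic if and only if $x_i$ and $\bar{x}_i$ receive different colors, so every proper $2$-coloring encodes a \emph{consistent} assignment in which each variable and its negation get opposite truth values. Next, for a clause hyperedge $\{\ell_{j_1},\ell_{j_2},\ell_{j_3},f\}$, note that because $f$ is present the edge is non-monochromatic exactly when at least one of the three literal-vertices gets the color opposite to $f$, i.e.\ exactly when at least one literal of $C_j$ is true. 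Combining the two observations, a proper $2$-coloring of $H$ exists if and only if $\phi$ is satisfiable, which yields the desired NP-hardness.

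The main obstacle, and precisely what the global vertex $f$ is designed to overcome, is the \emph{color symmetry} of hypergraph $2$-coloring: the two colors are interchangeable, so a coloring carries no intrinsic notion of which color means \emph{true}. Including $f$ in every clause edge fixes a reference color and breaks this symmetry, so that ``non-monochromatic'' translates into the asymmetric requirement ``at least one true literal'' rather than the symmetric ``not-all-equal'' condition. An alternative route that avoids the extra vertex is to reduce instead from \textsc{Not-All-Equal 3-SAT}, whose own symmetry matches that of $2$-coloring; there one needs only the size-two variable edges and a size-three edge per clause, but this merely shifts the work onto first establishing NP-completeness of \textsc{NAE-3SAT}. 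Verifying the two directions of the equivalence is then routine: a satisfying assignment induces a proper coloring directly, and conversely, after possibly swapping the two colors so that $f$ is false, a proper coloring induces a satisfying assignment.
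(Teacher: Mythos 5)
Your proposal is correct. The paper itself offers no proof of this statement---it is imported verbatim as a citation to Lov\'asz's 1973 paper---so there is no in-paper argument to compare against; what you have written is a valid self-contained substitute. The reduction is sound: the two-element edges $\{x_i,\bar{x}_i\}$ force every proper $2$-coloring to encode a consistent assignment, and since $f$ lies in every clause edge, that edge is bichromatic precisely when some literal-vertex takes the color opposite to $f$, i.e.\ when the clause is satisfied under the convention ``true $=$ color opposite to $f$.'' Both directions of the equivalence go through, including the degenerate cases (a clause containing a complementary pair of literals yields an edge that is automatically bichromatic, matching the fact that such a clause is always satisfied), and the construction is clearly polynomial. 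For the record, Lov\'asz's original argument proceeds differently (via a reduction from graph colorability rather than from satisfiability), so your route is not a reconstruction of his proof but the now-standard \textsc{3-SAT} reduction; it establishes exactly the NP-hardness claim the paper needs, and since the theorem asserts only hardness, the omission of an NP-membership check is not a gap.
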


Theorem \ref{thm2.4} and the following theorem imply all the NP-completeness entries in Table \ref{tab:symmetric_digraphs}. 

\begin{theorem}\label{thm2.6}
    Let $D$ be a symmetric digraph and $S\subseteq V(D)$ with $|S|=k$ ($k$ is part of the input) and $r\in S$. Let $\ell\geq 2$ be a fixed integer. The problem of deciding if $\tau_{S,r}(D)\geq \ell$ is NP-complete.
\end{theorem}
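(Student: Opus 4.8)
The plan is to reduce from the \textbf{2-Coloring Hypergraph} problem, which is NP-hard by Theorem~\ref{thm2.5}. Membership in NP is immediate: since $\ell$ is a fixed constant, a collection of $\ell$ pairwise internally-disjoint pendant $(S,r)$-trees is a certificate of polynomial size whose validity (the out-tree structure, the pendant degree conditions at $S$, arc-disjointness, and $V(T_i)\cap V(T_j)=S$) can be checked in polynomial time. So the whole work lies in the hardness reduction.

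Given a hypergraph $H$ with $V(H)=\{x_1,\dots,x_n\}$ and $E(H)=\{e_1,\dots,e_m\}$ (we may assume $m\geq 1$), I would build a symmetric digraph $D=\overleftrightarrow{G'}$ by specifying its underlying graph $G'$ as follows. Introduce a root $r$; a \emph{terminal} $t_e$ for each hyperedge $e\in E(H)$; a \emph{selector} vertex $g_x$ for each $x\in V(H)$; and $\ell-2$ \emph{padding} vertices $z_1,\dots,z_{\ell-2}$. The (symmetric) adjacencies are: $r\leftrightarrow g_x$ and $r\leftrightarrow z_j$ for all $x,j$; a complete symmetric graph on $\{g_x\mid x\in V(H)\}$; $g_x\leftrightarrow t_e$ exactly when $x\in e$; and $z_j\leftrightarrow t_e$ for all $j$ and all $e$. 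Set $S=\{r\}\cup\{t_e\mid e\in E(H)\}$ with root $r$, so that $k=|S|=m+1$ is part of the input, and $D$ is clearly symmetric and connected. The intended correspondence is that a colour class of $V(H)$ is modelled by the set of selectors used by one tree, the decisive structural feature being that the only in-neighbours of a terminal $t_e$ are the selectors $g_x$ with $x\in e$ together with all the padding vertices.

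For correctness I would argue both directions. For the forward direction, from a proper $2$-colouring $c:V(H)\to\{1,2\}$ (each edge meeting both colours) I build two internally-disjoint pendant $(S,r)$-trees $T_1,T_2$: tree $T_i$ enters a selector $g_x$ with $c(x)=i$, spans the colour-$i$ selectors using the complete symmetric graph on the $g$'s, and attaches each $t_e$ as a leaf through a single selector $g_x$ with $x\in e$ and $c(x)=i$ (which exists because $e$ meets colour $i$); since $T_1$ and $T_2$ use disjoint selector sets they are internally disjoint. Adding the $\ell-2$ obvious \emph{star-shaped} trees $r\to z_j\to\{t_e\mid e\in E(H)\}$ yields $\ell$ pairwise internally-disjoint pendant $(S,r)$-trees, whence $\tau_{S,r}(D)\geq\ell$. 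For the backward direction, suppose $\tau_{S,r}(D)\geq\ell$ with trees $T_1,\dots,T_\ell$. Each $z_j$ is a non-terminal vertex and so lies in at most one tree; hence at most $\ell-2$ trees contain a padding vertex, and at least two trees, say $T_1,T_2$, contain none. These two trees must therefore reach every terminal $t_e$ through a selector. Reading off the $H$-vertices whose selectors are used by $T_1$ and by $T_2$ gives two disjoint subsets $W_1,W_2\subseteq V(H)$, each meeting every hyperedge; colouring $W_2$ with colour $2$ and everything else with colour $1$ yields a proper $2$-colouring of $H$.

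The main obstacle I expect is the backward direction, and in particular making the counting argument fully rigorous: I must show that the two padding-free trees are forced to route to each $t_e$ \emph{only} via selectors (using that the sole in-neighbours of $t_e$ are the selectors of $e$ and the padding vertices), that their selector sets are genuinely disjoint (from internal-disjointness), and that each such set meets every hyperedge (from the pendant condition forcing $t_e$ to have in-degree one in every tree). A secondary technical point is verifying that both directions respect the degree constraints of a pendant $(S,r)$-tree, namely $d^-(r)=0$, $d^+(r)=1$, and each $t_e$ a leaf of in-degree one. Handling general fixed $\ell\geq 2$, rather than only $\ell=2$, is exactly what the $\ell-2$ padding vertices accomplish, and confirming that they cannot be exploited to manufacture $\ell$ trees in the absence of a proper colouring is the crux of the reduction.
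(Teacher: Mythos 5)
Your construction is, up to renaming, exactly the paper's: selectors are the vertices of $H$, terminals are the hyperedges, the $\ell-2$ padding vertices play the role of the set $U$, and both directions of the equivalence (colour classes giving two selector-routed trees plus $\ell-2$ padding stars; conversely two padding-free trees yielding disjoint vertex sets each meeting every hyperedge) match the paper's argument step for step. The proposal is correct and takes essentially the same approach as the paper.
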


\begin{proof}
    Let $H$ be a hypergraph with vertex set $V(H)$ and edge set $E(H)$. We construct a new digraph $D$ (from $H$) with $$V(D)=V(H)\cup E(H)\cup U\cup \{r\},$$ where $U=\{u_i\mid i\in [\ell-2]\}$. The arc-set of $D$ is defined as 
    \begin{align*}
        A(D)=&\{xe,ex\mid x\in V(H),e\in E(H)~\mbox{and}~x\in V(e) \}\\
             &\cup\{ru_{i},u_{i}r,u_{i}e,eu_{i}\mid u_{i}\in U~\mbox{and}~e\in E(H)\}\\
             &\cup\{xy,yx,rz,zr\mid x,y,z\in V(H)~\mbox{and}~x\neq y\}.
    \end{align*}

    \begin{figure}
        \centering
        \begin{tikzpicture}
            \draw (-1,-0.5) rectangle (4,1.2);
            \node[circle,draw,fill=white,inner sep=5pt,line width=0.8pt] (v1) at (-0.5,0) {};
            \node[circle,draw,fill=white,inner sep=5pt,line width=0.8pt] (v2) at (0.5,0) {};
            \node at (1.25,0) {$\cdots$};
            \node[circle,draw,fill=white,inner sep=5pt,line width=0.8pt] (vi) at (2,0) {};
            \node at (2.75,0) {$\cdots$};
            \node[circle,draw,fill=white,inner sep=5pt,line width=0.8pt] (vn) at (3.5,0) {};
            \node at (5.5,0) {$V(H)$};
        
            \draw (-1,-2.2) rectangle (4,-1.2);
            \node[circle,draw,fill=white,inner sep=5pt,line width=0.8pt] (ej) at (2,-1.7) {};
            \node at (5.5,-1.7) {$E(H)$};

            \draw (-1,-4) rectangle (4,-3);
            \node[circle,draw,fill=white,inner sep=5pt,line width=0.8pt] (u1) at (0,-3.5) {};
            \node[circle,draw,fill=white,inner sep=5pt,line width=0.8pt] (u2) at (0.7,-3.5) {};
            \node[circle,draw,fill=white,inner sep=7pt,line width=0.8pt] (u3) at (2.5,-3.5) {};
            \node at (1.65,-3.5) {$\cdots$};
            \node at (5.5,-3.5) {$U$};
            \node[circle,draw,fill=white,inner sep=5pt,line width=0.8pt] (r) at (-2,-3.5) {};

            \draw (-3,-4.5) rectangle (6.5,1.5);
            \node at (-2,0.5) {$D$};

            \draw[<->] (v1) -- (v2);
            \draw[<->] (v1) to [bend left=30] (vi);
            \draw[<->] (v1) to [bend left=50] (vn);
            \draw[<->] (v2) to [bend right=15] (vi);
            \draw[<->] (v2) to [bend left=30] (vn);
            \draw[<->] (vi) to [bend right=30] (vn);
            \draw[<->] (1.5,-2.2) -- (1.5,-3);
            \draw[<->] (r) -- (-1,-3.5);
            \draw[<->] (r) to [bend left=45] (-1,0);           
            \draw[<->] (vi) -- (ej);

            \node at (v1) {$v_{1}$};
            \node at (v2) {$v_{2}$};
            \node at (vi) {$v_{i}$};
            \node at (vn) {$v_{n}$};
            \node at (ej) {$e_{j}$};
            \node at (u1) {$u_{1}$};
            \node at (u2) {$u_{2}$};
            \node at (u3) {$u_{\ell-2}$};
            \node at (r) {$r$};
            \node [right] at (1.92,-0.88) {$v_{i}\in e_{j}$};
        \end{tikzpicture}
        \caption{The symmetric digraph $D$}
        \label{fig:thm2.6}
    \end{figure}
    
    This completes the construction of $D$ as shown in Figure~\ref{fig:thm2.6}. Clearly, $D$ is symmetric. Let $S=E(H)\cup\{r\}$ and let $r$ be the root. We will show that $\tau_{S,r}(D)\geq \ell$ if and only if $H$ is 2-colorable.

    First assume that $H$ is 2-colorable and let $R$ (resp. $B$) be the set of red vertices (resp. blue vertices) of $V(H)$ in a proper 2-coloring of $H$. Let $r_{1}\in R$ and $b_{1}\in B$. For each $i\in [\ell-2]$, let $T_{i}$ be a pendant $(S,r)$-tree which is the union of the arc $ru_{i}$ and all arcs from $u_{i}$ to $S\backslash \{r\}$. Let $T_{\ell-1}$ be a pendant $(S,r)$-tree which contains the arc $rr_{1}$, all arcs from $r_{1}$ to $R\backslash \{r_{1}\}$, and an arc from some vertex of $R$ to $e$ for each $e\in E(H)$ (such an arc exists for each $e\in E(H)$, as every edge in $E(H)$ contains a red vertex).
    Analogously, let $T_{\ell}$ be a pendant $(S,r)$-tree which contains the arc $rb_{1}$, all arcs from $b_{1}$ to $B\backslash \{b_{1}\}$, and an arc from some vertex of $B$ to $e$ for each $e\in E(H)$ (such an arc exists for each $e\in E(H)$, as every edge in $E(H)$ contains a blue vertex). It can be checked that $\{T_i\mid i\in [\ell]\}$ is a set of $\ell$ pairwise internally-disjoint pendant $(S,r)$-trees, which means $\tau_{S,r}(D)\geq \ell$.


Conversely, assume that $\tau_{S,r}(D)\geq \ell$ and let $\{T'_i\mid i\in [\ell]\}$ be a set of $\ell$ pairwise internally-disjoint pendant $(S,r)$-trees in $D$. As $|U| = \ell-2$, there are at least two of these trees, say $T_1'$ and $T_2'$, which contain no vertex from $U$. Let $R'$ (resp. $B'$) be all in-neighbours of $E(H)$ in $T_1'$ (resp. $T_2'$), that is, $R' = N_{T_1'}^-(E(H))$ and $B' = N_{T_2'}^-(E(H))$ (moreover, $R'\cap B'=\emptyset$, as $T_1'$ and $T_2'$ are internally-disjoint). Since each $e\in E(H)$ has an in-arc in $T'_1$  (resp. $T'_2$), $e$ must contain at least a vertex from $R'$ (resp. $B'$). Finally, any vertex in $V(H)\setminus (R'\cup B')$ can be assigned randomly to either $R'$ or $B'$. In this way, we get a proper $2$-coloring of $H$, that is, $H$ is $2$-colorable. This completes the proof.

\end{proof}

\subsection{Hardness of approximating IDPSTP}
In this subsection, we show that, unless $P=NP$, any approximation algorithm for the IDPSTP problem has a guarantee of $\Omega(n^{{1/3}-\epsilon})$. This result is derived by constructing a reduction from the Directed 2-Linkage problem, where the reduction is inspired by that used in \cite{Che-Sal} for the internally-disjoint directed Steiner tree problem. 

\begin{theorem}\label{inappro}
    For any given instance of IDPSTP, it is NP-hard to approximate the solution within $O(n^{{1/3}-\epsilon})$ for any $\epsilon> 0$.
\end{theorem}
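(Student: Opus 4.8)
The plan is to reduce from Directed 2-Linkage, which is NP-complete (Theorem~\ref{thm2.1} gives even the Eulerian case), and to turn its constant YES/NO gap into a polynomial one, following the spirit of Cheriyan and Salavatipour~\cite{Che-Sal}. The first step is a base gadget. From a 2-linkage instance $[D^{*};s_1,s_2,t_1,t_2]$ I would build a digraph $B$ with a root $r$ and a fixed terminal set $S$ (of constant size, say $k=3$) so that the number of internally-disjoint pendant $(S,r)$-trees that can traverse $B$ is $2$ when $D^{*}$ is positive and $1$ when it is negative. The gadget has a single guaranteed internal lane, and a second lane becomes available exactly when the two vertex-disjoint paths $s_1\to t_1$ and $s_2\to t_2$ exist; because each pendant tree must enter a terminal through a degree-one arc and the trees are arc-disjoint and share only $S$, every tree is forced to occupy a full lane, so the packing value of $B$ equals its lane-capacity. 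This step is essentially the reduction behind Theorem~\ref{thm2.2}.

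The second step is gap amplification, since one copy only separates $2$ from $1$. I would compose copies of $B$ by substitution, parameterized by a level count $L$: start from a host in which many pendant trees leave a common root $r$ and reach $S$, and recursively replace each traversal slot by a fresh copy of $B$, so that lane-capacities multiply across levels. If $D^{*}$ is positive then every copy is open and the resulting digraph $D_L$ admits $2^{L}=:N$ pairwise internally-disjoint pendant $(S,r)$-trees; if $D^{*}$ is negative then every copy is closed and only $O(1)$ such trees survive. The branching is arranged (padding the copies so they stay vertex-disjoint) so that $|V(D_L)|=\Theta(N^{3})$, whence $N=\Theta(n^{1/3})$ for $n=|V(D_L)|$; the coincidence $\log 2/\log 8=1/3$ is what pins the exponent.

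Combining the two steps, it is NP-hard to distinguish $\tau_{S,r}(D_L)\geq N$ from $\tau_{S,r}(D_L)\leq c$ for an absolute constant $c$. A polynomial-time algorithm approximating IDPSTP within a factor $O(n^{1/3-\epsilon})$ has ratio $o(n^{1/3})$, hence for all large $N$ a ratio strictly below the gap $N/c=\Theta(n^{1/3})$; running it on $D_L$ would therefore reveal which of the two cases holds and solve Directed 2-Linkage in polynomial time, forcing $P=NP$. The additive $\epsilon$ in the exponent absorbs the lower-order overhead coming from the (input-dependent) size of $D^{*}$ and from the approach of $2^{L}/8^{L}$ to its limiting exponent.

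The hard part will be the soundness of the amplified construction, i.e.\ the bound $\tau_{S,r}(D_L)=O(1)$ in the negative case; completeness is a routine explicit listing of $N$ trees. In the negative case I must rule out trees that splice fragments from several copies of $B$, or that exploit the shared root and terminal set to manufacture additional internally-disjoint trees. The key will be to show that the pendant condition (every terminal has degree one) together with internal-disjointness (pairwise common vertex set exactly $S$) forces each surviving tree to commit to a single lane in every copy it enters, so that a closed copy genuinely caps the number of trees by a constant; making the copies sufficiently firewalled for this argument is precisely what the cubic blow-up is for.
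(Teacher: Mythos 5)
Your overall plan (reduce from Directed 2-Linkage, build a crossing gadget whose ``capacity'' is $2$ or $1$ according to the linkage answer, then amplify the gap) matches the spirit of the paper, but the amplification step as you describe it has a genuine gap, and in fact cannot yield the exponent $1/3$. You propose a recursive substitution over $L$ levels in which lane-capacities multiply, so that the positive case gives $2^{L}=N$ trees while the size grows by a constant factor $8$ per level, giving $|V(D_L)|=\Theta(N^{3})$. But every level of the recursion must embed \emph{fresh} copies of the gadget $B$, and each copy of $B$ contains a copy of the input digraph $D^{*}$, whose size $m=|V(D^{*})|$ is part of the input and not a constant. Hence each level multiplies the size by $\Theta(m)$ (not by $8$), so $n=|V(D_L)|=m^{\Theta(L)}$ while the gap is only $2^{L}$; this gives $N=n^{\Theta(1/\log m)}=n^{o(1)}$, which is far weaker than $n^{1/3-\epsilon}$. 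The additive $\epsilon$ trick (absorbing $m$ into the exponent) only works when $m$ enters the size \emph{once}, multiplicatively, as in $n=O(N^{3}\cdot m)$ with $N=m^{1/\epsilon}$; it cannot absorb an $m^{L}$ factor. Moreover, the soundness claim --- that in the negative case only $O(1)$ trees survive the composed construction --- is exactly the crux, and ``each tree commits to a single lane in every copy it enters'' is asserted rather than proved; with $N$ parallel lanes leaving the root, nothing in a naive parallel/recursive composition forces distinct trees to collide inside a closed copy of $B$.

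The paper avoids both problems with a \emph{single-level} construction: it sets $N=|V(H)|^{1/\epsilon}$, lays out $N$ sources $s_{i}$, $N$ sinks $t_{i}$, $N(N-1)$ connector vertices $u_{i,j}$, and places $O(N^{3})$ copies of the 2-linkage instance $H$ at the grid crossings; crucially the terminal set is $S=\{r\}\cup\{t_{1},\dots,t_{N}\}$, so $k=N+1$ grows with the instance (it is not constant as in your step with $k=3$). Because each pendant $(S,r)$-tree must reach \emph{all} $N$ sinks, any two trees are forced to cross inside some copy of $H$, and a crossing is possible exactly when the 2-linkage is solvable; an induction on the index of the second tree shows the negative case admits exactly one tree, while the positive case admits $N$ explicitly constructed trees. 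The size is $O(N^{3}\cdot|V(H)|)=O(N^{3+\epsilon})$, which is where $n^{1/3-\epsilon}$ comes from. If you want to salvage your outline, you should replace the recursive multiplicative amplification by such a one-shot grid with a growing terminal set, and supply the negative-case induction that pins the packing number at $1$.
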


\begin{proof}
    We will construct a gap amplifier from Directed 2-Linkage to IDPSTP.
    
    Let $I=[H;x_{1},y_{1},x_{2},y_{2}]$ be an instance of Directed 2-Linkage which is shown in Figure \ref{fig:digraph H}(a). If the instance is positive, then there exist two vertex-disjoint directed paths $P_{1}, P_{2}$, where $P_{i}$ starts at $x_{i}$ and ends at $y_{i}$ for all $i\in[2]$.
    \begin{figure}[h]
        \centering
        \begin{tikzpicture}
            \begin{scope} [xshift=0cm]
            \draw (0,0) circle (1.5);
            \coordinate (x1) at (-1.5,0);
            \coordinate (x2) at (0,1.5);
            \coordinate (y1) at (1.5,0);
            \coordinate (y2) at (0,-1.5);
    
            \node[circle,fill,inner sep=2pt] at (x1) {};
            \node[circle,fill,inner sep=2pt] at (x2) {};
            \node[circle,fill,inner sep=2pt] at (y1) {};
            \node[circle,fill,inner sep=2pt] at (y2) {};
    
            \node[left] at (x1) {$x_{1}$};
            \node[above] at (x2) {$x_{2}$};
            \node[right] at (y1) {$y_{1}$};
            \node[below] at (y2) {$y_{2}$};
    
            \node at (0,0) {\Large$H$};
            \node at (0,-2.5) {(a) The graph $H$};
            \end{scope}

            \begin{scope} [xshift=6cm]
            \draw (0,0) circle (1.5);
            \coordinate (x1) at (-1.5,0);
            \coordinate (x2) at (0,1.5);
            \coordinate (y1) at (1.5,0);
            \coordinate (y2) at (0,-1.5);
    
            \node[circle,fill,inner sep=2pt] at (x1) {};
            \node[circle,fill,inner sep=2pt] at (x2) {};
            \node[circle,fill,inner sep=2pt] at (y1) {};
            \node[circle,fill,inner sep=2pt] at (y2) {};
    
            \node[left] at (x1) {$x_{1,ij}^{k}$};
            \node[above] at (x2) {$x_{2,ij}^{k}$};
            \node[right] at (y1) {$y_{1,ij}^{k}$};
            \node[below] at (y2) {$y_{2,ij}^{k}$};
    
            \node at (0,0) {\Large$H_{ij}^{k}$};
            \node at (0,-2.5) {(b) The copy of $H$};
            \end{scope}
        \end{tikzpicture}
        \caption{An instance of Directed 2-Linkage}
        \label{fig:digraph H}
    \end{figure}
    
    Given an $\epsilon> 0$ and let $N=|V(H)|^{1/\epsilon}$. We construct a new digraph $D'$ (from $H$) with the following operations: Firstly, we create $\frac{(N^{2}-2N+2)(N-1)}{2}$ copies of the digraph $H$, and place all copies of $H$ at the position marked with $H$ in Figure \ref{fig:digraph D'}. Secondly, we create two sets of vertices $S_{\alpha}=\{s_{i}\mid i\in[N]\},S_{\beta}=\{t_{i}\mid i\in[N]\}$ and let $S_{i}=\{s_{i}\}\cup S_{\beta}\backslash \{t_{i}\}$ for each $i\in[N]$. Meanwhile, we add $N(N-1)$ additional vertices $u_{i,j}$ where $i,j\in[N]$ and $i\neq j$, and arrange these vertices in the positions indicated in Figure \ref{fig:digraph D'}. Thirdly, let all the edges in Figure \ref{fig:digraph D'} (except for the both vertices of arcs inside each $H$) be oriented from top to bottom and from left to right.

\tikzset{
    vertex/.style={circle,draw,fill=white,inner sep=1.5pt,line width=0.8pt},
    solidvertex/.style = {circle,fill = black,minimum size = 5pt,inner sep = 0pt}, 
    base vertex/.style={circle,draw,fill=white,inner sep=3.8pt,line width=0.4pt
    }
}

\begin{figure}[H]
    \centering
    \begin{tikzpicture}
        \node [solidvertex] (b1) at (1.3,-1.5) {};
        \node [solidvertex] (b2) at (3.85,-1.5) {};
        \node [solidvertex] (b3) at (6.5,-1.5) {};
        \node at (7.1,-1.5) {$\cdots$};
        \node at (7.6,-1.5) {$\cdots$};
        \node at (8.1,-1.5) {$\cdots$};
        \node at (8.6,-1.5) {$\cdots$};
        \node at (9.1,-1.5) {$\cdots$};
        \node [solidvertex] (b4) at (9.65,-1.5) {};
        \node [solidvertex] (b5) at (11.7,-1.5) {};

        \node [solidvertex] (a1) at (0,0) {};
        \node [base vertex] (G1) at (0.4,0) {};
        \node [base vertex] (G2) at (0.9,0) {};
        \node [base vertex] (G3) at (1.85,0) {};
        \node [base vertex] (G4) at (2.35,0) {};

        \node [vertex] (x11) at (2.7,0) {};
        \node [base vertex] (G5) at (3.05,0) {};
        \node [base vertex] (G6) at (3.55,0) {};
        \node [base vertex] (G7) at (4.5,0) {};
        \node [base vertex] (G8) at (5,0) {};

        \node [vertex] (x12) at (5.35,0) {};
        \node [base vertex] (G9) at (5.7,0) {};
        \node [base vertex] (G10) at (6.2,0) {};
        \node [base vertex] (G11) at (7.15,0) {};
        \node [base vertex] (G12) at (7.65,0) {};

        \node [vertex] (x13) at (8.5,0) {};
        \node [base vertex] (G13) at (8.85,0) {};
        \node [base vertex] (G14) at (9.35,0) {};
        \node [base vertex] (G15) at (10.3,0) {};
        \node [base vertex] (G16) at (10.8,0) {};

        \node [vertex] (x14) at (11.15,0) {};

        \node [solidvertex] (a2) at (0,1) {};
        \node [vertex] (x21) at (0.4,1) {};
        \node [base vertex] (G17) at (0.9,1) {};
        \node [base vertex] (G18) at (1.85,1) {};
        \node [base vertex] (G19) at (2.35,1) {};

        \node [base vertex] (G20) at (3.05,1) {};
        \node [base vertex] (G21) at (3.55,1) {};
        \node [base vertex] (G22) at (4.5,1) {};
        \node [base vertex] (G23) at (5,1) {};

        \node [vertex] (x22) at (5.7,1) {};
        \node [base vertex] (G24) at (6.2,1) {};
        \node [base vertex] (G25) at (7.15,1) {};
        \node [base vertex] (G26) at (7.65,1) {};

        \node [vertex] (x23) at (8.85,1) {};
        \node [base vertex] (G27) at (9.35,1) {};
        \node [base vertex] (G28) at (10.3,1) {};
        \node [base vertex] (G29) at (10.8,1) {};

        \node [vertex] (x24) at (11.15,1) {};

        \node [solidvertex] (a3) at (0,2) {};
        \node [vertex] (x31) at (0.9,2) {};
        \node [base vertex] (G30) at (1.85,2) {};
        \node [base vertex] (G31) at (2.35,2) {};

        \node [vertex] (x32) at (3.05,2) {};
        \node [base vertex] (G32) at (3.55,2) {};
        \node [base vertex] (G33) at (4.5,2) {};
        \node [base vertex] (G34) at (5,2) {};

        \node [base vertex] (G35) at (6.2,2) {};
        \node [base vertex] (G36) at (7.15,2) {};
        \node [base vertex] (G37) at (7.65,2) {};

        \node [vertex] (x33) at (9.35,2) {};
        \node [base vertex] (G38) at (10.3,2) {};
        \node [base vertex] (G39) at (10.8,2) {};

        \node [vertex] (x34) at (11.15,2) {};

        \node at (0,2.5) {$\vdots$};
        \node at (0,3) {$\vdots$};
        \node at (0,3.5) {$\vdots$};
        \node at (0,4) {$\vdots$};
        \node at (0,4.5) {$\vdots$};
        \node [solidvertex] (a4) at (0,5) {};
        \node [vertex] (x41) at (1.85,5) {};
        \node [base vertex] (G40) at (2.35,5) {};

        \node [vertex] (x42) at (4.5,5) {};
        \node [base vertex] (G41) at (5,5) {};

        \node [vertex] (x43) at (7.15,5) {};
        \node [base vertex] (G42) at (7.65,5) {};

        \node [base vertex] (G43) at (10.8,5) {};

        \node [vertex] (x44) at (11.15,5) {};

        \node [solidvertex] (a5) at (0,6) {};
        \node [vertex] (x51) at (2.35,6) {};
        \node [vertex] (x52) at (5,6) {};
        \node [vertex] (x53) at (7.65,6) {};
        \node [vertex] (x54) at (10.8,6) {};

        \draw (a1) -- (G1);
        \draw (G1) -- (G2);
        \draw [dashed] (G2) -- (G3);
        \draw (G3) -- (G4);
        \draw (G4) -- (x11);
        \draw (x11) -- (G5);
        \draw (G5) -- (G6);
        \draw [dashed] (G6) -- (G7);
        \draw (G7) -- (G8);
        \draw (G8) -- (x12);
        \draw (x12) -- (G9);
        \draw (G9) -- (G10);
        \draw [dashed] (G10) -- (G11);
        \draw (G11) -- (G12);
        \draw [dashed] (G12) -- (x13);
        \draw (x13) -- (G13);
        \draw (G13) -- (G14);
        \draw [dashed] (G14) -- (G15);
        \draw (G15) -- (G16);
        \draw (G16) -- (x14);

        \draw (a2) -- (x21);
        \draw (x21) -- (G17);
        \draw [dashed] (G17) -- (G18);
        \draw (G18) -- (G19);
        \draw (G19) -- (G20);
        \draw (G20) -- (G21);
        \draw [dashed] (G21) -- (G22);
        \draw (G22) -- (G23);
        \draw (G23) -- (x22);
        \draw (x22) -- (G24);
        \draw [dashed] (G24) -- (G25);
        \draw (G25) -- (G26);
        \draw [dashed] (G26) -- (x23);
        \draw (x23) -- (G27);
        \draw [dashed] (G27) -- (G28);
        \draw (G28) -- (G29);
        \draw (G29) -- (x24);

        \draw (a3) -- (x31);
        \draw [dashed] (x31) -- (G30);
        \draw (G30) -- (G31);
        \draw (G31) -- (x32);
        \draw (x32) -- (G32);
        \draw [dashed] (G32) -- (G33);
        \draw (G33) -- (G34);
        \draw (G34) -- (G35);
        \draw [dashed] (G35) -- (G36);
        \draw (G36) -- (G37);
        \draw [dashed] (G37) -- (x33);
        \draw [dashed] (x33) -- (G38);
        \draw (G38) -- (G39);
        \draw (G39) -- (x34);

        \draw (a4) -- (x41);
        \draw (x41) -- (G40);
        \draw (G40) -- (x42);
        \draw (x42) -- (G41);
        \draw (G41) -- (x43);
        \draw (x43) -- (G42);
        \draw [dashed] (G42) -- (G43);
        \draw (G43) -- (x44);

        \draw (a5) -- (x51);
        \draw (x51) -- (x52);
        \draw (x52) -- (x53);
        \draw [dashed] (x53) -- (x54);

        \draw (x51) -- (G40);
        \draw (x52) -- (G41);
        \draw (x53) -- (G42);
        \draw (x54) -- (G43);
        
        \draw [dashed] (x41) -- (G30);
        \draw [dashed] (G40) -- (G31);
        \draw [dashed] (x42) -- (G33);
        \draw [dashed] (G41) -- (G34);
        \draw [dashed] (x43) -- (G36);
        \draw [dashed] (G42) -- (G37);
        \draw [dashed] (G43) -- (G39);

        \draw (x31) -- (G17);
        \draw (G30) -- (G18);
        \draw (G31) -- (G19);
        \draw (x32) -- (G20);
        \draw (G32) -- (G21);
        \draw (G33) -- (G22);
        \draw (G34) -- (G23);
        \draw (G35) -- (G24);
        \draw (G36) -- (G25);
        \draw (G37) -- (G26);
        \draw (x33) -- (G27);
        \draw (G38) -- (G28);
        \draw (G39) -- (G29);

        \draw (x21) -- (G1);
        \draw (G17) -- (G2);
        \draw (G18) -- (G3);
        \draw (G19) -- (G4);
        \draw (G20) -- (G5);
        \draw (G21) -- (G6);
        \draw (G22) -- (G7);
        \draw (G23) -- (G8);
        \draw (x22) -- (G9);
        \draw (G24) -- (G10);
        \draw (G25) -- (G11);
        \draw (G26) -- (G12);
        \draw (x23) -- (G13);
        \draw (G27) -- (G14);
        \draw (G28) -- (G15);
        \draw (G29) -- (G16);

        \foreach \i in {1,...,4}{
            \draw (G\i) -- (b1);
            \draw (G\the\numexpr\i+4\relax) -- (b2);
            \draw (G\the\numexpr\i+8\relax) -- (b3);
            \draw (G\the\numexpr\i+12\relax) -- (b4); 
        }
        \draw (x11) -- (b2);
        \draw (x12) -- (b3);
        \draw (x13) -- (b4);

        \draw (x14) -- (b5);
        \draw (x24) -- (b5);
        \draw (x34) -- (b5);
        \draw (x44) -- (b5);
        \draw (x54) -- (x44);
        \foreach \i in {1,...,3}{
            \node[below] at (a\i) {$s_{\i}$};
            \node[below] at (b\i) {$t_{\i}$};
        }
        \node [below] at (a4) {$s_{N-1}$};
        \node [below] at (a5) {$s_{N}$};
        \node [below] at (b4) {$t_{N-1}$};
        \node [below] at (b5) {$t_{N}$};
        \node [above,scale=0.9] at (x11) {$u_{1,2}$};
        \node [above,scale=0.9] at (x12) {$u_{1,3}$};
        \node [above,scale=0.9] at (x13) {$u_{1,N-1}$};
        \node [above,scale=0.9] at (x14) {$u_{1,N}$};

        \node [above,scale=0.9] at (x21) {$u_{2,1}$};
        \node [above,scale=0.9] at (x22) {$u_{2,3}$};
        \node [above,scale=0.9] at (x23) {$u_{2,N-1}$};
        \node [above,scale=0.9] at (x24) {$u_{2,N}$};

        \node [above,scale=0.9] at (x31) {$u_{3,1}$};
        \node [above,scale=0.9] at (x32) {$u_{3,2}$};
        \node [above,scale=0.9] at (x33) {$u_{3,N-1}$};
        \node [above,scale=0.9] at (x34) {$u_{3,N}$};

        \node [above,scale=0.9] at (x41) {$u_{N-1,1}$};
        \node [above,scale=0.9] at (x42) {$u_{N-1,2}$};
        \node [above,scale=0.9] at (x43) {$u_{N-1,3}$};
        \node [above,scale=0.9] at (x44) {$u_{N-1,N}$};

        \node [above,scale=0.9] at (x51) {$u_{N,1}$};
        \node [above,scale=0.9] at (x52) {$u_{N,2}$};
        \node [above,scale=0.9] at (x53) {$u_{N,3}$};
        \node [above,scale=0.9] at (x54) {$u_{N,N-1}$};
        \foreach \i in {1,...,43} {
            \node at (G\i) {$H$};
        }

        \draw[dotted, thick, green] (8.65,-0.35) rectangle (11.05,0.35);
        \draw[dotted, thick, red] (10.55,-0.3) rectangle (11.03,5.35);

    \end{tikzpicture}
    \caption{The construction of $D'$}
    \label{fig:digraph D'}
\end{figure}
    
    Let $1\leq i<k\leq N$, $j\in [N-1]$ and $j\neq k$. For each copy of $H$, we denote as $H_{ij}^{k}$ according to its position. Specifically, $H_{ij}^{k}$ is located at the intersection of the horizontal layer where vertex $s_{i}$ lies and the vertical layer where vertex $u_{k,j}$ lies. Meanwhile, we relabel the four vertices $x_{1},y_{1},x_{2},y_{2}$ in the instance of $H$ as $x_{1,ij}^{k},y_{1,ij}^{k},x_{2,ij}^{k},y_{2,ij}^{k}$ in $H_{ij}^{k}$ (see Figure \ref{fig:digraph H}(b)), and denote the paths $P_{1},P_{2}$ in $H$ by $P_{1,ij}^{k},P_{2,ij}^{k}$ in $H_{ij}^{k}$ respectively. Note that, for any directed path from $s_{i}$ to $t_{j}$ (if exists) which passing through $H_{ij}^{k}$ must enter $H_{ij}^{k}$ at $x_{1,ij}^{k}$ or $x_{2,ij}^{k}$, and leave $H_{ij}^{k}$ at $y_{1,ij}^{k}$ or $y_{2,ij}^{k}$. 
    
    Thus, if $I$ is a positive instance of Directed 2-Linkage, then we can find the following $N$ vertex-disjoint trees $T_{1},T_{2},\cdots,T_{N}$ such that $T_{i}$ is a pendant $(S_{i},s_{i})$-tree for each $i\in [N]$: 
    \begin{itemize}
        \item 
        $\begin{aligned}[t]
            V(T_{1})=&S_{1}\cup \{u_{1,j}\mid2\leq j\leq N\} \\
            &\cup \{x\mid x\in V(P_{1,1j}^{k}),j\in[N-1],2\leq k\leq N,j\neq k\},\\
            A(T_{1})=&\{s_{1}x_{1,11}^{2},u_{1,2}x_{1,12}^{3}\}\cup \{y_{1,1j}^{N}u_{1,j+1},u_{1,j+1}t_{j+1}\mid j\in [N-1]\}\\
            &\cup \{u_{1,j}x_{1,1j}^{2},y_{1,1j}^{j-1}x_{1,1j}^{j+1}\mid 3\leq j\leq N-1\}\\
            &\cup \{y_{1,1j}^{k}x_{1,1j}^{k+1}\mid j\in [N-1],2\leq k\leq N-1,j\neq k,k+1\}\\
            &\cup \{xy\mid xy\in A(P_{1,1j}^{k}),j\in[N-1],2\leq k\leq N,j\neq k\};
        \end{aligned}$
        \item For each $2\leq i\leq N-1$:\\
        $\begin{aligned}[t]
            V(T_{i})=&S_{i}\cup \{u_{i,j}\mid j\in [N],j\neq i\}\\
            &\cup \{x\mid x\in V(P_{1,ij}^{k}),j\in[N-1],i+1\leq k\leq N,j\neq k\}\\
            &\cup \{x\mid x\in V(P_{2,hj}^{i}),h\in[i-1],j\in[N-1],j\neq i\},\\
            A(T_{i})=&\{s_{i}u_{i,1},y_{1,i(i-1)}^{N}x_{1,ii}^{i+1},u_{i,i+1}x_{1,i(i+1)}^{i+2},u_{i,N}t_{N}\}\\
            &\cup\{u_{i,j}x_{1,ij}^{i+1}\mid j\in [N-1],j\neq i,i+1\}\\
            &\cup\{u_{i,j}x_{2,(i-1)j}^{i}\mid j\in [N-1],j\neq i\}\\
            &\cup\{y_{1,ij}^{N}u_{i,j+1}\mid j\in [N-1],j\neq i-1\}\\
            &\cup\{y_{1,ij}^{k}x_{1,ij}^{k+1}\mid j\in[N-1],i+1\leq k\leq N-1,j\neq k,k+1\}\\
            &\cup\{y_{1,ij}^{j-1}x_{1,ij}^{j+1}\mid i+2\leq j\leq N-1\}\\
            &\cup\{y_{2,(h+1)j}^{i}x_{2,hj}^{i},y_{2,1j}^{i}t_{j}\mid h\in [i-2],j\in [N-1],j\neq i\}\\
            &\cup \{xy\mid xy\in A(P_{1,ij}^{k}),j\in[N-1],i+1\leq k\leq N,j\neq k\}\\
            &\cup \{xy\mid xy\in A(P_{2,hj}^{i}),h\in [i-1],j\in[N-1],j\neq i\};
        \end{aligned}$
        \item 
        $\begin{aligned}[t]
            V(T_{N})=&S_{N}\cup \{u_{N,j}\mid j\in [N-1]\}\cup  \{x\mid x\in V(P_{2,hj}^{N}),h,j\in [N-1]\},\\
            A(T_{N})=&\{s_{N}u_{N,1}\}\cup\{u_{N,j}u_{N,j+1}\mid j\in [N-2]\}\\
            &\cup\{u_{N,j}x_{2,(N-1)j}^{N},y_{2,(h+1)j}^{N}x_{2,hj}^{N},y_{2,1j}^{N}t_{j}\mid h\in [N-2],j\in [N-1]\}\\
            &\cup \{xy\mid xy\in A(P_{2,hj}^{N}),h,j\in [N-1]\}.\\
        \end{aligned}$
        
    \end{itemize}


    We now construct a digraph $D$ from $D'$ by making the following modifications: 
    \begin{itemize}
        \item Add a new vertex $r$ and create $N$ new arcs from $r$ to $s_{i}$ for all $i\in [N]$.
        \item Create $N$ new arcs $s_{i}t_{i}$ for all $i\in[N]$.
        \item Add new arcs $s_{1}u_{2,1},u_{2,1}u_{3,1},\cdots,u_{N-1,1}u_{N,1}$. 
        \item For each odd integer $3\leq q\leq N-1$, add new arcs $u_{q,1}u_{q-1,2},u_{q-1,2}u_{q-2,3},\\ \cdots,u_{\frac{q+1}{2}+1,\frac{q+1}{2}-1}u_{\frac{q+1}{2}-1,\frac{q+1}{2}+1},\cdots,u_{2,q-1}u_{1,q}$. 
        \item For each even integer $q\in [N-1]$, add new arcs $u_{q,1}u_{q-1,2},u_{q-1,2}u_{q-2,3},\\ \cdots,u_{\lfloor\frac{q+1}{2}\rfloor+1,\lfloor\frac{q+1}{2}\rfloor}u_{\lfloor\frac{q+1}{2}\rfloor,\lfloor\frac{q+1}{2}\rfloor+1},\cdots,u_{2,q-1}u_{1,q}$.
    \end{itemize}
    
    Let $S=\{r\}\cup S_{\beta}$. To complete our argument, we still need to prove the following two lemmas. 

    \begin{claim}
        If $I$ is a positive instance of Directed 2-Linkage, then $D$ has $N$ internally-disjoint pendant $(S,r)$-trees.
    \end{claim}

    \begin{proof}
        If $I$ is a positive instance of Directed 2-Linkage, then there exist $N$ vertex-disjoint trees $T_{1},T_{2},\cdots,T_{N}$, where $T_{i}$ is the pendant $(S_{i},s_{i})$-tree established earlier. For each $i\in [N]$, we construct a pendant $(S,r)$-tree $\hat{T_{i}}$ with vertex set $V(\hat{T_{i}})=\{r,t_{i}\}\cup V(T_{i})$ and arc set $A(\hat{T_{i}})=\{rs_{i},s_{i}t_{i}\}\cup A(T_{i})$. One can verify that these $N$ pendant $(S,r)$-trees are pairwise internally-disjoint. This completes the proof of Claim 1.
    \end{proof}

    \begin{claim}
        If $I$ is a negative instance of Directed 2-Linkage, then $D$ has exactly one internally-disjoint pendant $(S,r)$-tree.
    \end{claim}

    \begin{proof}
        By the construction of the digraph $D$, we can always find one pendant $(S,r)$-tree $T^{*}$ formed by the arcs in
        \begin{align*}
            &\{rs_{1},s_{1}u_{2,1},s_{1}t_{1}\}\cup \{u_{i+1,1}u_{i+2,1}\mid i\in [N-2]\}\\
            \cup &\{u_{k,q+1-k}u_{k-1,q-k+2}\mid 2\leq k\leq q\leq N-1,k\neq \frac{q+1}{2},\frac{q+1}{2}+1\}\\
            \cup &\{u_{k,q-k+1}u_{k-2,q-k+3}\mid 3\leq q\leq N-1,k= \frac{q+1}{2}+1\},
        \end{align*}
        where $k,q$ are both integers.


        Thus, it remains to prove that there are not two internally-disjoint pendant $(S,r)$-trees. Otherwise, we arbitrarily choose two such trees, denoted as $\tilde{T_{1}}$ and $\tilde{T_{2}}$. Let $rs_{a}\in A(\tilde{T_{1}})$ and $rs_{b}\in A(\tilde{T_{2}})$, where $1\leq a\neq b\leq N$. 
        We just consider the case that $a< b$ here (as the argument for the remaining case is similar), and we aim to show that there does not exist a directed path from $s_{a}$ to $t_{N}$ in $\tilde{T_{1}}$ and a directed path from $s_{b}$ to $t_{b-1}$ in $\tilde{T_{2}}$ simultaneously.

        We proceed by induction on $b$. For the base case, let $b=2$ (so now $a=1$), and suppose there exists a directed path $\tilde{P}$ from $s_{1}$ to $t_{N}$ in $\tilde{T_{1}}$. Let $\tilde{Q}$ denote the directed path from $s_{2}$ to $t_{1}$ in $\tilde{T_{2}}$. From the earlier construction of digraph $D$, we know that $s_{2}u_{2,1}\in A(\tilde{Q})$. Consequently, the directed path $\tilde{P}$ in $\tilde{T_{1}}$ must enter each $H_{11}^{k}$ at $x_{1,11}^{k}$ and leave it at $y_{1,11}^{k}$ for all $2\leq k\leq N$. On the other hand, the directed path $\tilde{Q}$ in $\tilde{T_{2}}$ must enter some $H_{11}^{\ell}$ at $x_{2,11}^{\ell}$ and leave it at $y_{2,11}^{\ell}$ for some $2\leq \ell\leq N$ (otherwise, $\tilde{P}$ and $\tilde{Q}$ would not be vertex-disjoint). However, this implies that there exist two vertex-disjoint directed paths $P_{1,11}^{\ell}, P_{2,11}^{\ell}$ in $H_{11}^{\ell}$, where $P_{i,11}^{\ell}$ starts at $x_{i,11}^{\ell}$ and ends at $y_{i,11}^{\ell}$ for all $i\in[2]$, which produces a contradiction, since $I$ is a negative instance of Directed 2-Linkage. 

        For the induction step, let $b>2$ (so now $1\leq a\leq b-1$), and suppose that there exists a path $\tilde{P}$ from $s_{a}$ to $t_{N}$ in $\tilde{T_{1}}$. Let $\tilde{Q}$ denote the directed path from $s_{b}$ to $t_{b-1}$ in $\tilde{T_{2}}$. It is not hard to see that $\tilde{Q}$ cannot exist without using arcs from a copy of $H$. Note that $s_{b}u_{b,1}\in A(\tilde{Q})$. To ensure that $\tilde{P}$ and $\tilde{Q}$ are vertex-disjoint, the directed path $\tilde{P}$ must contain the directed subpath from $u_{a,1}$ to $u_{1,a}$ without using any arcs from a copy of $H$. If not, we can consider the directed path from $s_{b}$ to $t_{b-2}$ and $\tilde{P}$, and reach the same contradiction. Consequently, $\tilde{P}$ must enter each $H_{1,b-1}^{k}$ at $x_{1,1(b-1)}^{k}$ and leave it at $y_{1,1(b-1)}^{k}$ for all $b\leq k\leq N$. On the other hand, the directed path $\tilde{Q}$ in $\tilde{T_{2}}$ must enter some $H_{1,b-1}^{\ell}$ at $x_{2,1(b-1)}^{\ell}$ and leave it at $y_{2,1(b-1)}^{\ell}$ for some $b\leq \ell\leq N$. However, this implies that there exist two vertex-disjoint directed paths $P_{1,1(b-1)}^{\ell}, P_{2,1(b-1)}^{\ell}$ in $H_{1,b-1}^{\ell}$, where $P_{i,1(b-1)}^{\ell}$ starts at $x_{i,1(b-1)}^{\ell}$ and ends at $y_{i,1(b-1)}^{\ell}$ for all $i\in[2]$, which produces a contradiction, since $I$ is a negative instance of Directed 2-Linkage.
        As an example, let $a=N-1$ and $b=N$. To avoid repeated vertices, the directed path $\tilde{P}$ is the union of the arc $s_{N}u_{N-1,1}$, the directed path $L$ consisting of arcs 
        \begin{align*}
            &\{u_{k,q+1-k}u_{k-1.q-k+2}\mid 2\leq k\leq N-1,k\neq \frac{N}{2},\frac{N}{2}+1\}\\
            \cup &\{u_{k,q+1-k}u_{k-2.q-k+3}\mid k=\frac{N}{2}+1\},
        \end{align*}
        and the directed path from $u_{1,N-1}$ to $t_{N}$, the latter of which must pass through each $H_{1(N-1)}^{k}$ for $2\leq k\leq N$, as illustrated by the green dotted-line rectangle in Figure \ref{fig:digraph D'}. Moreover, the directed path $\tilde{Q}$ is required to traverse all $H_{i(N-1)}^{N}$ for $i\in [N-1]$, depicted by the red dotted-line rectangle in Figure \ref{fig:digraph D'}. Hence, we find that there exist two paths crossing each other in $H_{1(N-1)}^{N}$ without sharing any common vertices. It should be noted that even if we consider other ways of constructing the directed path $\tilde{P}$, we will always find an $H_{ij}^{k}$ that contains two vertex-disjoint paths, i.e. $P_{1,ij}^{k}$ and $P_{2,ij}^{k}$. In all cases, this contradicts the fact that $I$ is a negative instance of Directed 2-Linkage. Thus, Claim 2 is proved.
    \end{proof}

    Based on the above two claims, the positive instance of Directed 2-Linkage provides $N$ internally-disjoint pendant $(S,r)$-trees, the negative instance of Directed 2-Linkage provides exactly one internally-disjoint pendant $(S,r)$-tree. It implies that the NP-hard gap $[1+\epsilon,2]$ of Directed 2-Linkage is amplified to a bigger NP-hard gap $[1+\epsilon,N]$. Note that the number of copies of $H$ is $O(N^{3})$ where $N=|V(H)|^{1/\epsilon}$, we obtained the number of vertices in $D$ is $O(N^{3+\epsilon})$. Hence, we obtained a gap with $\Omega(n^{{1/3}-\epsilon})$. This completes the proof of the theorem.     
\end{proof}

\section{Bounds for $\tau_{k}(D)$}

In this section, we first give some basic results on the parameter $\tau_{k}(D)$ which will be used in the following argument.

\begin{proposition}\label{pro3.1}
    Let $D$ be a digraph with $n$ vertices, and let $k$ be an integer such that $2\leq k\leq n$. The following assertions hold:

    (i) $\tau_{k}(H)\leq \tau_{k}(D)$, where $H$ is a spanning subdigraph of $D$.

    (ii) $\tau_{k}(D)\leq \min\{\delta^{+}(D),\delta^{-}(D)\}$.

    (iii) $\tau_{k}(D)=0$ for $k\geq \max\{\delta^{0}(D)+1,3\}$.
\end{proposition}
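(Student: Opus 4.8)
The plan is to prove the three assertions of Proposition~\ref{pro3.1} separately, each by a direct structural argument from the definition of $\tau_k(D)$. Recall that $\tau_k(D)=\min\{\tau_{S,r}(D)\}$ over all choices of $S$ with $|S|=k$ and $r\in S$, where $\tau_{S,r}(D)$ counts the maximum number of pairwise internally-disjoint pendant $(S,r)$-trees.

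For assertion (i), I would argue monotonicity under adding arcs. Let $H$ be a spanning subdigraph of $D$, so $V(H)=V(D)$ and $A(H)\subseteq A(D)$. Fix any $S\subseteq V(D)=V(H)$ with $|S|=k$ and any root $r\in S$. Any collection of pairwise internally-disjoint pendant $(S,r)$-trees in $H$ is also such a collection in $D$, since every arc of $H$ is an arc of $D$ and the tree structure, the pendant condition, and the internal-disjointness condition are all preserved. Hence $\tau_{S,r}(H)\le\tau_{S,r}(D)$ for every admissible $(S,r)$. Taking the minimum over all such $(S,r)$ on both sides gives $\tau_k(H)\le\tau_k(D)$. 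This step is routine.

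For assertion (ii), I would use the local degree obstruction at the root and the terminals. Fix any $S$ and $r$ achieving the minimum. In a pendant $(S,r)$-tree, the root $r$ has out-degree exactly $1$, so its single out-arc lies in $N_D^+(r)$; since the trees are arc-disjoint, distinct trees use distinct out-arcs at $r$, and therefore the number of trees is at most $d^+(r)$, hence at most $\delta^+(D)$ for a suitable choice of $S,r$ making $r$ a vertex of minimum out-degree. Symmetrically, any non-root terminal $v\in S\setminus\{r\}$ has in-degree exactly $1$ in each tree, so the number of arc-disjoint trees is bounded by $d^-(v)$, giving the bound $\delta^-(D)$. Combining, $\tau_k(D)\le\min\{\delta^+(D),\delta^-(D)\}$. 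The one point requiring care is to choose $S$ so that it contains a vertex of minimum out-degree as root (or a vertex of minimum in-degree as a terminal); since we are taking a minimum over all $S$ in the definition of $\tau_k$, it suffices to exhibit one bad $(S,r)$, which is always possible when $k\le n$.

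For assertion (iii), I would show that when $k\ge\max\{\delta^0(D)+1,3\}$ there is a choice of $(S,r)$ admitting no pendant $(S,r)$-tree at all. The key observation is that in a pendant $(S,r)$-tree every non-root terminal has in-degree exactly one and out-degree zero, so the $k-1$ terminals in $S\setminus\{r\}$ are leaves; consequently the internal (non-terminal) vertices must supply all the branching. The main obstacle, and the crux of the argument, will be to pick $S$ containing a vertex $w$ whose semi-degree is as small as $\delta^0(D)$ and to show that when $|S|-1>\delta^0(D)$ this forces a vertex to violate either the pendant condition or the out-tree structure; concretely, one places $w$ and enough of its neighbours into $S$ so that $w$ cannot simultaneously be a pendant terminal and have its unique required neighbour available, yielding $\tau_{S,r}(D)=0$ and hence $\tau_k(D)=0$. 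The condition $k\ge 3$ is needed to distinguish this from the $k=2$ connectivity case. I would verify the construction carefully for both the case $w$ serving as root (out-degree bound) and as terminal (in-degree bound), which is where the detailed checking lies.
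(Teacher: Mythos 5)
Your proposal follows essentially the same route as the paper: (i) and (ii) are the routine monotonicity and degree-counting arguments (the paper states these without proof), and for (iii) the paper does exactly what you describe — it puts a vertex $w$ realizing $\delta^{0}(D)$ together with all of its out-neighbours (resp.\ in-neighbours) into $S$, takes $w$ (resp.\ an in-neighbour of $w$) as the root, and observes that the root's unique out-arc, or $w$'s unique in-arc, must go to or come from a terminal that is forced to be a leaf, so no pendant $(S,r)$-tree can reach the remaining $k-2\geq 1$ terminals. Two small points to fix when you write (iii) out: the working inequality should be $|S|-1\geq \delta^{0}(D)$, not $|S|-1>\delta^{0}(D)$ — as stated your threshold would miss the boundary case $k=\delta^{0}(D)+1$, which is exactly the value claimed in the proposition, although the construction you describe (placing $w$ and all $\delta^{0}(D)$ of its relevant neighbours into $S$) does fit inside $S$ precisely when $k\geq\delta^{0}(D)+1$; and in the in-degree case you must choose the root $r$ \emph{inside} $N^{-}_{D}(w)$, since otherwise $S\supseteq\{w\}\cup N^{-}_{D}(w)\cup\{r\}$ would require $k\geq\delta^{0}(D)+2$.
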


\begin{proof}
    Part (i) and (ii) can be easily verified by the definition of $\tau_{k}(D)$.
    
    We now prove part(iii). The argument is divided into the following two cases:

    Case 1: $\delta^{+}(D)\leq \delta^{-}(D)$. We first consider the case that $\delta^{+}(D)=1$.  Assume that $\delta_{D}^{+}(r)=1$ and let $N_{D}^{+}(r)=\{x\}$. Choose $S \subseteq V(D)$ such that $|S|=k\geq 3$ and $\{r,x\}\subseteq S$. Clearly, $\tau_{S,r}(D)=0$, and so $\tau_{k}(D)=0$ for $k\geq 3$. 
    
    We next consider the case that $\delta^{+}(D)=t\geq 2$. Assume that $\delta_{D}^{+}(r)=t$. We choose $S\subseteq V(D)$ such that $|S|=k\geq t+1$ and $(\{r\} \cup N_{D}^{+}(r))\subseteq S$. Clearly, $\tau_{S,r}(D)=0$, and so $\tau_{k}(D)=0$ for $k\geq t+1$.

    Case 2: $\delta^{+}(D)> \delta^{-}(D)$. We first consider the case that $\delta^{-}(D)=1$. Assume that $\delta_{D}^{-}(u)=1$ and let $N_{D}^{-}(u)=\{r\}$. Choose $S\subseteq V(D)$ such that $|S|=k\geq 3$ and $\{r,u\}\subseteq S$. Clearly, $\tau_{S,r}(D)=0$, and so $\tau_{k}(D)=0$ for $k\geq 3$. 
    
    We next consider the case that $\delta^{-}(D)=t\geq 2$. Assume that $\delta_{D}^{-}(u)=t$ and let $r\in N_{D}^{-}(u)$. We choose $S\subseteq V(D)$ such that $|S|=k\geq t+1$ and $(\{u\}\cup N_{D}^{-}(u))\subseteq S$. Clearly, $\tau_{S,r}(D)=0$, and so $\tau_{k}(D)=0$ for $k\geq t+1$.

    According to the above argument, the conclusion holds.
\end{proof}

Now we give a sharp lower and a sharp upper bound for $\tau_k(D)$ on a general digraph $D$.

\begin{theorem}\label{thm3.1}
    Let $k$, $n$ be two integers with $3\leq k\leq n$, and let $D$ be a digraph of order $n$. We have $$0\leq \tau_{k}(D)\leq n-k.$$ Moreover, both bounds are sharp. In
particular, the upper bound holds if and only if $D\cong \overleftrightarrow{K}_{n}$. 
\end{theorem}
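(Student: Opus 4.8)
The plan is to establish the three parts of the statement separately: the trivial lower bound, the upper bound $\tau_k(D)\le n-k$, and the characterization of the equality case. The lower bound $\tau_k(D)\ge 0$ is immediate from the definition, since the number of internally-disjoint pendant trees is a nonnegative integer; sharpness of the lower bound follows from Proposition~\ref{pro3.1}(iii), which already exhibits digraphs with $\tau_k(D)=0$ (e.g. any digraph with $\delta^0(D)<k-1$). So the real content lies in the upper bound and the equality characterization.

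For the upper bound, I would fix an arbitrary $S\subseteq V(D)$ with $|S|=k$ and root $r\in S$, and bound $\tau_{S,r}(D)$ directly. The key structural observation is that any pendant $(S,r)$-tree $T$ has every vertex of $S$ as a leaf (degree one), so the only vertices of $T$ that can be shared between two internally-disjoint pendant $(S,r)$-trees are exactly the vertices of $S$. Each internally-disjoint pendant $(S,r)$-tree that is \emph{nontrivial} (i.e. uses a Steiner point) must contain at least one internal vertex from $V(D)\setminus S$, and by internal-disjointness these internal vertices are distinct across the collection. Since $|V(D)\setminus S|=n-k$, this caps the number of trees that genuinely require a Steiner vertex at $n-k$. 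The remaining subtlety is trees that use \emph{no} internal vertices outside $S$: such a tree would need $r$ adjacent directly to every other vertex of $S$, and one must check that at most a controlled number of such ``direct'' trees can coexist with the Steiner-using ones under the arc-disjointness and internal-disjointness constraints. I would argue that the total count is still at most $n-k$ by assigning to each tree in a maximum internally-disjoint family a distinct private internal vertex, using the fact that $k\ge 3$ forces each pendant tree (since $r$ and the $k-1$ other terminals are all leaves) to contain at least one branching/internal vertex outside $S$. This assignment is the crux of the bound.

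For sharpness of the upper bound and the ``only if'' characterization, I would first verify that $\tau_k(\overleftrightarrow{K}_n)=n-k$ by explicitly exhibiting $n-k$ internally-disjoint pendant $(S,r)$-trees in the complete symmetric digraph: for each vertex $w\in V\setminus S$, build a star-like tree routing $r$ to every terminal through the single Steiner vertex $w$ (using the arcs $rw$ and $w v$ for $v\in S\setminus\{r\}$), and observe these are arc-disjoint with pairwise common vertex set exactly $S$. The harder direction is showing equality \emph{forces} $D\cong\overleftrightarrow{K}_n$. The natural route is contrapositive: if $D\not\cong\overleftrightarrow{K}_n$, then some arc $xy$ (in at least one direction) is missing; I would choose $S$ and $r$ cleverly to exploit this missing arc and show $\tau_{S,r}(D)<n-k$, hence $\tau_k(D)<n-k$. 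Concretely, a missing arc reduces the available semi-degree at some vertex, and combined with Proposition~\ref{pro3.1}(ii) (which gives $\tau_k(D)\le\min\{\delta^+(D),\delta^-(D)\}$) one can often already conclude when the missing arc lowers a relevant semi-degree below $n-k$; the delicate cases are when all semi-degrees still equal $n-1$ despite a missing arc, which cannot happen, so a careful degree-counting argument should close it.

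The step I expect to be the main obstacle is the equality characterization, specifically proving that $D\not\cong\overleftrightarrow{K}_n$ strictly decreases $\tau_k$ for \emph{every} choice exploiting the missing arc — one must ensure the worst-case $S,r$ really witnesses the drop, rather than merely showing it drops for some contrived instance. The counting in the upper-bound proof (guaranteeing a distinct private Steiner vertex for each tree) is also nontrivial because a single tree might use several internal vertices while another uses few, so the injection from trees to internal vertices must be chosen with care; I anticipate handling this by a clean averaging or Hall-type matching argument rather than an ad hoc assignment.
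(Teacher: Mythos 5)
Your lower bound, the sharpness examples, and the upper bound $\tau_k(D)\le n-k$ are all fine. For the upper bound you in fact use a different (and arguably more direct) mechanism than the paper: you observe that for $k\ge 3$ every pendant $(S,r)$-tree must contain a Steiner vertex in $V(D)\setminus S$, and internal-disjointness makes these Steiner vertex sets pairwise disjoint, so the number of trees is at most $|V(D)\setminus S|=n-k$. This works immediately for arbitrary $D$ (no Hall-type matching is needed -- one private Steiner vertex per tree already suffices, so your worry at the end is moot). The paper instead localizes at the root: since $r$ has degree one in a pendant tree and is the root of an out-tree, its unique out-arc must go to a vertex of $\overline{S}$ when $k\ge 3$, and distinct trees use distinct such arcs; this gives $\tau_{S,r}(\overleftrightarrow{K}_n)\le n-k$ and then monotonicity under spanning subdigraphs (Proposition~\ref{pro3.1}(i)) transfers the bound to all $D$.

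The genuine gap is in the equality characterization. Your plan is to take a missing arc $xy$ and conclude via Proposition~\ref{pro3.1}(ii) that some semi-degree drops; but a single missing arc only forces $d^+(x)\le n-2$, hence $\tau_k(D)\le n-2$, and for $k\ge 3$ this is weaker than the required $\tau_k(D)\le n-k-1$. Your fallback sentence (``the delicate cases are when all semi-degrees still equal $n-1$ \dots which cannot happen'') does not address this: the problem is not whether a semi-degree drops below $n-1$, but that dropping to $n-2$ proves nothing. What is needed is a \emph{localized} count at the root: given a missing arc $ab$ (and $k<n$), choose $S$ with root $r=a$ and $b\notin S$; then the unique out-arc of $r$ in each pendant $(S,r)$-tree goes to a vertex of $\overline{S}\setminus\{b\}$, and distinct trees use distinct such out-neighbours, so $\tau_{S,r}(D)\le n-k-1$. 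This is exactly how the paper closes the characterization (it deletes the arc $u_kv_{n-k}$ from $\overleftrightarrow{K}_n$ and invokes monotonicity). Your private-Steiner-vertex count does not localize in this way by itself -- a tree could still reach $b$ through another Steiner vertex -- so you must switch to the root's out-arcs (or argue that the tree whose Steiner set contains $b$ must contain a second Steiner vertex) to finish. As stated, your argument for the ``only if'' direction does not go through.
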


\begin{proof}
The lower bound is clear. For its sharpness, let $T$ be an orientation of a star $K_{1,n-1}$ of order $n$ such that the out-degree of one vertex is $n-1$ and the out-degrees of  all remaining vertices are 0. By Proposition~\ref{pro3.1}(ii) and (iii), $\tau_{k}(T)=0$ for each $3\leq k\leq n$.

We now prove the upper bound and characterize the extremal graphs.
Let $H\cong \overleftrightarrow{K}_{n}$ such that $V(H)=\{u_{i}, v_{j}\mid i\in [k], j\in [n-k]\}$. Without loss of generality, assume that $S=\{u_{i}\mid i\in [k]\}$, choose $u_{k}$ as the root. We construct $n-k$ pairwise internally-disjoint pendant $(S,u_{k})$-trees: $T_{i}~(i\in [n-k])$ such that $A(T_{i})=\{u_{k}v_{i},v_{i}u_{1},v_{i}u_{2},\cdots,v_{i}u_{k-1}\}$, which means that $\tau_{S, u_k}(H)\geq n-k$. Furthermore, we have $\tau_{k}(H)\geq n-k$. On the other hand, by the definition of a pendant tree, there are at most $n-k$ out-arcs of $u_k$ which can be used in pendant $(S,u_{k})$-trees, that is, $\tau_{S, u_k}(H)\leq n-k$, so we have $\tau_{k}(H)\leq n-k$. Thus, $\tau_{k}(\overleftrightarrow{K}_{n})=n-k$. 

Now let $H'\cong H-e$, where $e\in A(H)$. Without loss of generality, assume that $e=u_{k}v_{n-k}$. By the definition of a pendant tree, there are at most $n-k-1$ out-arcs of $u_k$ which can be used in pendant $(S,u_{k})$-trees of $H'$, that is, $\tau_{S, u_k}(H')\leq n-k-1$, so we have $\tau_{k}(\overleftrightarrow{K}_{n}-e)\leq n-k-1$.

Now we have $\tau_{k}(\overleftrightarrow{K}_{n})=n-k$ and $\tau_{k}(\overleftrightarrow{K}_{n}-e)\leq n-k-1$ for any arc $e$ of $\overleftrightarrow{K}_{n}$. By Proposition~~\ref{pro3.1}(i), for a general digraph $D$, we have $\tau_{k}(D)\leq  \tau_{k}(\overleftrightarrow{K}_{n})=n-k.$ Moreover, this bound holds if and only if $D\cong \overleftrightarrow{K}_{n}$.
\end{proof}

Next, we give a sharp upper bound for $\tau_{k}(D)$ in terms of arc-cuts of $D$. For two disjoint vertex subsets $A,B\subseteq V(D)$, we use $(A, B)_{D}$ to denote the set of arcs from $A$ to $B$ in $D$.

\begin{theorem}\label{thm3.2}
    Let $k$, $n$ be two integers with $3\leq k\leq n$. Let $D$ be a digraph of order $n$. We have
    \begin{align*}
        \tau_{k}(D)\leq \min\limits_{S\subseteq V(D),|S| = k,r\in S} \lfloor \frac{1}{k-1}|(\overline{S},S\backslash\{r\})_{D}| \rfloor,
    \end{align*}
    where $\overline{S}=V(D)\backslash S$. Moreover, the bound is sharp.
\end{theorem}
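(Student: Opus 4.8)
The plan is to fix an arbitrary terminal set $S$ with $|S|=k$ and root $r\in S$, take a maximum family of pairwise internally-disjoint pendant $(S,r)$-trees, and bound its size by counting the arcs entering $S\setminus\{r\}$ from outside. The starting observation is that in any pendant $(S,r)$-tree $T$ each vertex $v\in S\setminus\{r\}$ has in-degree exactly one, so $T$ contributes exactly $k-1$ arcs whose heads lie in $S\setminus\{r\}$; the goal is to show that all $k-1$ of their tails lie in $\overline{S}$, so that they are counted by $(\overline{S},S\setminus\{r\})_{D}$.

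The key structural step, which I expect to be the main obstacle, is to prove that the tail $w$ of the in-arc $wv$ of each terminal $v\in S\setminus\{r\}$ lies in $\overline{S}$. Two possibilities must be excluded. First, $w\notin S\setminus\{r\}$, since every vertex of $S\setminus\{r\}$ has out-degree zero in $T$ and so cannot be a parent. Second --- and this is where the hypothesis $k\geq 3$ is essential --- $w\neq r$: the root has out-degree exactly one in $T$, and if its unique out-arc went to a leaf $v\in S\setminus\{r\}$ then the only vertex reachable from $r$ would be $v$, forcing $V(T)=\{r,v\}$ and contradicting $S\subseteq V(T)$ with $|S|\geq 3$; hence the out-arc of $r$ must go to a Steiner vertex in $\overline{S}$, and in particular $r$ is the parent of no terminal. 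Combining the two exclusions forces $w\in\overline{S}$.

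Once this is in place, the counting is routine: if there are $m=\tau_{S,r}(D)$ pairwise internally-disjoint (hence arc-disjoint) pendant $(S,r)$-trees, then they jointly use $m(k-1)$ distinct arcs of $(\overline{S},S\setminus\{r\})_{D}$, whence $m(k-1)\leq |(\overline{S},S\setminus\{r\})_{D}|$ and $\tau_{S,r}(D)\leq \lfloor \frac{1}{k-1}|(\overline{S},S\setminus\{r\})_{D}|\rfloor$. Since this holds for every choice of $S$ and $r$, picking the pair $(S_0,r_0)$ minimizing the right-hand side gives $\tau_{k}(D)=\min_{S,r}\tau_{S,r}(D)\leq \tau_{S_0,r_0}(D)\leq \lfloor \frac{1}{k-1}|(\overline{S_0},S_0\setminus\{r_0\})_{D}|\rfloor$, which is exactly the claimed minimum.

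For sharpness I would take $D\cong\overleftrightarrow{K}_{n}$. For every admissible pair $(S,r)$ we have $|(\overline{S},S\setminus\{r\})_{D}|=(n-k)(k-1)$, so the right-hand side equals $n-k$ for each pair, and hence its minimum over all $(S,r)$ is $n-k$; by Theorem~\ref{thm3.1} we also have $\tau_{k}(\overleftrightarrow{K}_{n})=n-k$, so the bound is attained with equality.
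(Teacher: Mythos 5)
Your proposal is correct and follows essentially the same route as the paper: count, for each pendant $(S,r)$-tree, the $k-1$ arcs entering $S\setminus\{r\}$, argue their tails lie in $\overline{S}$, and use arc-disjointness to get $(k-1)\tau_{S,r}(D)\leq |(\overline{S},S\setminus\{r\})_{D}|$, with $\overleftrightarrow{K}_{n}$ for sharpness. The only difference is that you spell out in full the step the paper dismisses with ``observe'' (in particular ruling out $r$ as a parent of a terminal via $k\geq 3$), which is a welcome addition rather than a deviation.
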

\begin{proof}
Let $S\subseteq V(D)$ with $|S|=k$ and $r\in S$.  Let $\mathcal{T}=\{T_{i}\mid i\in [\ell]\}$ be a set of $\ell$ pairwise internally-disjoint pendant $(S,r)$-trees, where $\ell =\tau_{S,r}(D)$. Observe that for each $T_{i}\in \mathcal{T}$, we have
    \begin{align*}
        |A(T_{i})\cap (\overline{S},S\backslash\{r\})_{D}|\geq k-1.
    \end{align*}
    Therefore,
    \begin{align*}
        (k-1){\ell}\leq \sum\limits_{i=1}^{\ell}|A(T_{i})\cap (\overline{S},S\backslash\{r\})_{D}|\leq |(\overline{S},S\backslash\{r\})_{D}|.
    \end{align*}

    Hence,
    \begin{align*}
        \tau_{S,r}(D)=\ell\leq \lfloor \frac{1}{k-1}|(\overline{S},S\backslash\{r\})_{D}| \rfloor.
    \end{align*}
    By the arbitrariness of $S$ and $r$, we have
    \begin{align*}
        \tau_{k}(D)&=\min\limits_{S\subseteq V(D),|S|=k,r\in S}\tau_{S,r}(D)\\
        &\leq \min\limits_{S\subseteq V(D),|S| = k,r\in S} \lfloor \frac{1}{k-1}|(\overline{S},S\backslash\{r\})_{D}| \rfloor.
    \end{align*}
    To show the sharpness of the bound, we consider the complete digraph $D=\overleftrightarrow{K_{n}}$. For any $S\subseteq V(D)$ with $|S|=k$ and $r\in S$, we have $$|(\overline{S},S\backslash\{r\})_{D}|=(k-1)(n-k),$$ and so $$\frac{1}{k-1}|(\overline{S},S\backslash\{r\})_{D}|=n-k.$$ Furthermore, the following holds:
    $$\min\limits_{S\subseteq V(D),|S| = k,r\in S} \lfloor \frac{1}{k-1}|(\overline{S},S\backslash\{r\})_{D}| \rfloor=n-k.$$ On the other hand, $\tau_{k}(D)=n-k$ by Theorem~\ref{thm3.2}. Therefore, when $D=\overleftrightarrow{K_{n}}$, 
$$\tau_{k}(D)=n-k=\min\limits_{S\subseteq V(D),|S| = k,r\in S} \lfloor \frac{1}{k-1}|(\overline{S},S\backslash\{r\})_{D}| \rfloor.$$
\end{proof}

Given a graph parameter $f(G)$, the Nordhaus-Gaddum problem is to determine sharp bounds for (1) $f(G)+f(G^{c})$ and (2) $f(G)f(G^{c})$, where $G^c$ is the complement of $G$. 
The Nordhaus-Gaddum type relations have received wide attention, and the readers can see \cite{Aouchiche-Hansen} for a survey. The following Theorem gives the sharp Nordhaus-Gaddum-type bounds for the parameter $\tau_{k}(D)$.

\begin{theorem}\label{thm3.3}
    Let $k$ and $n$ be two integers with $3\leq k\leq n$. Let $D$ be a general digraph of order $n$. The following assertions hold:

    (i) $0\leq \tau_{k}(D)+\tau_{k}(D^{c})\leq n-k$. Moreover, both bounds are sharp.

    (ii) $0\leq \tau_{k}(D)\tau_{k}(D^{c})\leq \lfloor (\frac{n-k}{2})^{2} \rfloor$. Moreover, the lower bound is sharp.
\end{theorem}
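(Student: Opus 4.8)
The plan is to prove the Nordhaus-Gaddum type bounds for $\tau_k(D)+\tau_k(D^c)$ and $\tau_k(D)\tau_k(D^c)$ by reducing everything to Theorem~\ref{thm3.1} and a careful counting of arcs. First I would record the elementary lower bounds: since $\tau_k$ is always nonnegative, both $\tau_k(D)+\tau_k(D^c)\geq 0$ and $\tau_k(D)\tau_k(D^c)\geq 0$ are immediate, and sharpness of the lower bounds follows by exhibiting a single digraph $D$ (for instance the orientation $T$ of a star used in the proof of Theorem~\ref{thm3.1}) whose complement also has a vertex of small semi-degree, forcing both terms to be $0$ by Proposition~\ref{pro3.1}(iii). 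The main content is therefore the upper bounds.

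For the upper bound in (i), the key observation is that for any fixed terminal set $S$ with $|S|=k$ and root $r\in S$, the arc set $(\overline{S},S\setminus\{r\})_D$ and the arc set $(\overline{S},S\setminus\{r\})_{D^c}$ are disjoint and together form exactly the full arc set $(\overline{S},S\setminus\{r\})_{\overleftrightarrow{K}_n}$ of the complete digraph, which has cardinality $(k-1)(n-k)$. Thus I would apply Theorem~\ref{thm3.2} to both $D$ and $D^c$ simultaneously: choosing a single common $S$ and $r$ that minimizes the relevant cut for $D$, I get
\begin{align*}
\tau_k(D)+\tau_k(D^c)\leq \left\lfloor \tfrac{1}{k-1}|(\overline{S},S\setminus\{r\})_D|\right\rfloor + \left\lfloor \tfrac{1}{k-1}|(\overline{S},S\setminus\{r\})_{D^c}|\right\rfloor \leq \tfrac{1}{k-1}\cdot (k-1)(n-k)=n-k,
\end{align*}
where the floors only help. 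Sharpness of this upper bound should follow by taking $D\cong\overleftrightarrow{K}_n$ so that $D^c$ is empty, giving $\tau_k(D)=n-k$ and $\tau_k(D^c)=0$ by Theorem~\ref{thm3.1} and Proposition~\ref{pro3.1}.

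For the product bound in (ii), I would combine the additive bound from (i) with the AM-GM inequality: from $\tau_k(D)+\tau_k(D^c)\leq n-k$ and the fact that both factors are nonnegative integers, the product is maximized (subject to an integer sum at most $n-k$) when the two values are as equal as possible, yielding $\tau_k(D)\tau_k(D^c)\leq \lfloor (\tfrac{n-k}{2})^2\rfloor$. I expect the main obstacle to be the subtlety in the floor function when handling (ii): the product of two nonnegative integers summing to at most $n-k$ is bounded by $\lfloor n-k/2\rfloor\cdot\lceil n-k/2\rceil=\lfloor(\tfrac{n-k}{2})^2\rfloor$, and care is needed to verify this identity for both parities of $n-k$. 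The sharpness claim for the lower bound of (ii) is easy (reuse the star example), while the upper bound sharpness would require constructing a digraph $D$ with $\tau_k(D)$ and $\tau_k(D^c)$ both near $(n-k)/2$; the statement only asserts sharpness of the lower bound in (ii), so I would not need to construct such an extremal example, which is fortunate since balancing both $\tau_k$ values simultaneously appears genuinely hard.
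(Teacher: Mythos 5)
Your argument is correct, and for the one nontrivial step---the upper bound in (i)---it takes a genuinely different route from the paper. The paper deduces $\tau_k(D)+\tau_k(D^c)\leq n-k$ from the pointwise inequality $\tau_{S,r}(D)+\tau_{S,r}(D^{c})\leq \tau_{S,r}(\overleftrightarrow{K}_{n})$ together with $\tau_k(\overleftrightarrow{K}_n)=n-k$ from Theorem~\ref{thm3.1}; you instead fix a common $S$ and $r$, apply the arc-cut bound of Theorem~\ref{thm3.2} to $D$ and to $D^c$ separately, and use the fact that $(\overline{S},S\setminus\{r\})_D$ and $(\overline{S},S\setminus\{r\})_{D^c}$ partition a set of $(k-1)(n-k)$ arcs. (A small remark: you do not need to choose the minimizing $S,r$; any common choice works, since Theorem~\ref{thm3.2} bounds $\tau_k$ by the cut quantity for every $S,r$.) Your route has a concrete advantage: the paper's inequality $\tau_{S,r}(D)+\tau_{S,r}(D^{c})\leq \tau_{S,r}(\overleftrightarrow{K}_{n})$ is stated without justification, and the obvious justification (take a maximum family in each of $D$ and $D^c$ and unite them) is incomplete, because a tree from $D$ and a tree from $D^c$ are arc-disjoint but could a priori share internal vertices; your derivation via Theorem~\ref{thm3.2} sidesteps this entirely. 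For part (ii) both you and the paper use AM--GM on the sum bound, and your extra care with $\lfloor \frac{n-k}{2}\rfloor\cdot\lceil \frac{n-k}{2}\rceil=\lfloor(\frac{n-k}{2})^{2}\rfloor$ is harmless (integrality of the product already suffices). Your sharpness examples also differ slightly but are equally valid: for the lower bounds you reuse the oriented star $T$ (whose complement has a vertex of out-degree $0$, so $\tau_k(T)=\tau_k(T^c)=0$ by Proposition~\ref{pro3.1}), where the paper uses a nonstrong tournament; for the upper bound in (i) you both use $D\cong\overleftrightarrow{K}_n$. You are also right that only the lower bound in (ii) is claimed sharp, so no balanced extremal construction is required.
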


\begin{proof}
    Part (i). The lower bound holds by Theorem \ref{thm3.1}.
    Since $D\cup D^{c}=\overleftrightarrow{K}_{n}$ and $V(D)=V(D^{c})=V(\overleftrightarrow{K}_{n})$, for any $k$-subset $S\subseteq V(D)$ and $r\in S$, we have $\tau_{S,r}(D)+\tau_{S,r}(D^{c})\leq \tau_{S,r}(\overleftrightarrow{K}_{n})$. By Theorem \ref{thm3.1}, we have
    \begin{align*}
        n-k=\tau_{k}(\overleftrightarrow{K}_{n}) 
        &=\min\limits_{S\in V(\overleftrightarrow{K}_{n}),|S|=k,r\in S}\tau_{S,r}(\overleftrightarrow{K}_{n})\\
        &\geq \min\limits_{S\in V(D),|S|=k,r\in S}(\tau_{S,r}(D)+\tau_{S,r}(D^{c}))\\
        &\geq \tau_{k}(D)+\tau_{k}(D^{c}).
    \end{align*}
    Hence, the upper bound also holds.

    Part (ii). The lower bound holds by Theorem \ref{thm3.1}. For the upper bound, we have
    \begin{align*}
        \tau_{k}(D)\tau_{k}(D^{c})\leq(\frac{\tau_{k}(D)+\tau_{k}(D^{c})}{2})^{2}\leq (\frac{n-k}{2})^{2}.
    \end{align*}

    To show the sharpness, let $D=\overleftrightarrow{K}_{n}$, then we have $\tau_{k}(D)=n-k$ and $\tau_{k}(D^{c})=0$. We obtain $\tau_{k}(D)+\tau_{k}(D^{c})=n-k$ and $\tau_{k}(D)\tau_{k}(D^{c})=0$ which implies the upper bound in (i) and lower bound in (ii) are both sharp.

    Let $D$ be a nonstrong tournament, in which case it is not difficult to see that $D^{c}$ is also nonstrong. Then we have $\tau_{k}(D)=\tau_{k}(D^{c})=0$, which can serve as an example to illustrate the lower bound in (i) is sharp.
\end{proof}

\vskip 1cm

\noindent {\bf Acknowledgement.} This work was supported by National Natural Science Foundation of China under Grant No. 12371352 and Yongjiang Talent Introduction Programme of Ningbo under Grant No. 2021B-011-G.

\end{document}